\documentclass{amsart}
\usepackage[utf8]{inputenc}

\usepackage{amssymb,amsmath, amsthm, amsfonts,comment}
\usepackage{mathrsfs}
\usepackage[ruled,linesnumbered]{algorithm2e}

\usepackage[margin=1in]{geometry}
\usepackage{graphicx, subcaption}
\usepackage{xcolor}
\usepackage{tikz-cd}
\usepackage[doi=false,isbn=false,url=false,eprint=false,style=alphabetic,maxbibnames=99]{biblatex}
\addbibresource{references.bib}

\setcounter{MaxMatrixCols}{20}

\def\rk{\mathop{\rm rk}\nolimits}

\def\spann{\mathop{\rm span}\nolimits}

\def\im{\mathop{\rm im}}
\def\conv{\mathop{\rm conv}}

\def\py{\mathop{\rm py}}

\renewcommand{\int}{\mathrm{int}}
\def\Proj{\mathop{\rm Proj}}

\newcommand{\bbZ}{\mathbb{Z}}

\newcommand{\bbR}{\mathbb{R}}
\newcommand{\bbC}{\mathbb{C}}

\newcommand{\bbP}{\mathbb{P}}
\renewcommand{\P}{\mathbb{P}}

\newcommand{\cV}{\mathcal{V}}

\newcommand{\cN}{\mathcal{N}}

\usepackage{thmtools, thm-restate}
\declaretheorem[numberwithin=section]{theorem}

\newtheorem{lemma}[theorem]{Lemma}
\newtheorem{proposition}[theorem]{Proposition}

\theoremstyle{definition}
\newtheorem{example}[theorem]{Example}

\newtheorem{remark}[theorem]{Remark}


\usepackage{hyperref}
\hypersetup{
    colorlinks=true,
    linkcolor = blue,
    citecolor = blue,
    urlcolor = blue
}

\usepackage{cleveref}

\title{Pythagoras Numbers for Ternary Forms}

\author{Grigoriy Blekherman \and Alex Dunbar \and Rainer Sinn}

\begin{document}

\begin{abstract}
    We study the \emph{Pythagoras numbers} $\py(3,2d)$ of real ternary forms, defined for each degree $2d$ as the minimal number $r$ such that every degree $2d$ ternary form which is a sum of squares can be written as the sum of at most $r$ squares of degree $d$ forms. Scheiderer showed that $d+1\leq \py(3,2d)\leq d+2$. We show that $\py(3,2d) = d+1$ for $2d = 8,10,12$. The main technical tool is Diesel's characterization of height 3 Gorenstein algebras. 
\end{abstract}

\maketitle

\section{Introduction}

A central object in real algebraic geometry is the cone of sums of squares of homogeneous polynomials (forms). Forms usually have multiple representations as sums of squares, and the \emph{length} of a form $f$ is the minimum number $r$ such that $f$ is a sum of $r$ squares, i.e.,~$f = \sum_{i = 1}^r h_i^2$ for some degree $d$ forms $h_1,h_2,\ldots, h_r$. We are concerned with finding the \emph{Pythagoras number} $\py(n,2d)$, defined as the maximum length of any form in $n$ variables of degree $2d$ that is a sum of squares. 
In terms of Gram matrices connecting sums of squares to semidefinite programming \cite{zbMATH06125965, Chua2017GramSpectrahedra}, the Pythagoras number provides information on the existence of low-rank Gram matrices for a form $f$. If a form $f$ in $n$ variables of degree $2d$ is a sum of squares, then there is a positive semidefinite matrix $A$ of rank $\rk(A) \leq \py(n,2d)$ with the property $f(x) = [x]_d^\top A [x]_d$, where $[x]_d$ is a vector of all degree $d$ monomials in the variables $x_1,x_2,\ldots, x_n$. 

Pythagoras numbers are so far only known exactly in very few cases;  bounds on Pythagoras numbers are well-studied in real algebraic geometry starting with Hilbert's seminal paper \cite{Hilbert1888Ueber}. Since we can diagonalize quadratic forms in any number of variables, we have $\py(n,2) = n$ for any $n\geq 1$. Hilbert showed in \cite{Hilbert1888Ueber} that $\py(3,4)=3$. Modern approaches to this theorem in \cite{Blekherman2019LowRankSOSMinimalDegree,Blekherman2022QuadraticPersistence,Chua2017GramSpectrahedra,Scheiderer2017SOSLengthRealForms} have led to new results. However, the exact Pythagoras number has only been determined in few additional cases since Hilbert's work. Scheiderer showed that $\py(3,6)=4$ and $\py(4,4)=5$ \cite{Scheiderer2017SOSLengthRealForms}; both of these cases correspond to varieties of almost minimal degree for which a more general result was shown in \cite{Chua2017GramSpectrahedra}. In our paper, we establish three additional cases of Pythagoras number for ternary forms ($n=3$).

For ternary forms, it is known that $d+1\leq \py(3,2d)\leq d+2$ and that $\py(3,2d) = d+1$ for $2d = 4,6$ \cite{Blekherman2019LowRankSOSMinimalDegree,Chua2017GramSpectrahedra,Scheiderer2017SOSLengthRealForms, Blekherman2022QuadraticPersistence}. Our main result is that the lower bound $d+1$ is also sharp for $2d = 8,10,12$. 

\begin{theorem}\label{thm:IntroMainThm}
    If $2d = 8$, $2d = 10$, or $2d = 12$, then $\py(3,2d) = d+1$. 
\end{theorem}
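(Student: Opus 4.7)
Scheiderer's upper bound gives $\py(3,2d)\leq d+2$ and the matching lower bound $\geq d+1$ is already in the literature, so it remains to rule out length $d+2$ for $2d\in\{8,10,12\}$. My plan is to argue by contradiction: assume $f$ is a ternary SOS form of degree $2d$ with minimal SOS length exactly $d+2$, writing $f=\sum_{i=1}^{d+2}h_i^2$, and then show that the Gram spectrahedron of $f$ must in fact contain a PSD matrix of rank at most $d+1$.

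The central object of the argument is the apolar Gorenstein algebra $A=R/f^\perp$, where $R=\mathbb{R}[x_1,x_2,x_3]$; it is Artinian Gorenstein of codimension at most $3$ and socle degree $2d$. Real SOS representations of $f$ correspond to PSD bilinear forms on $A_d$ that realize $f$ under the perfect Gorenstein pairing $A_d\times A_d\to A_{2d}\cong\mathbb{R}$, and the SOS length of $f$ equals the minimum rank of such a form. The first step is to note that minimality of the chosen representation forces the classes $\bar h_i\in A_d$ to be linearly independent and the associated rank-$(d+2)$ Gram matrix to sit on an extreme face of the Gram spectrahedron, thereby imposing strong algebraic conditions on the span $\langle\bar h_1,\dots,\bar h_{d+2}\rangle$.

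The next step is to invoke Diesel's theorem, which classifies the Hilbert functions $\bfh_A=(1,3,h_2,\dots,h_{2d-2},3,1)$ of codimension $3$ Artinian Gorenstein algebras as symmetric $O$-sequences satisfying explicit combinatorial conditions, and which further describes the minimal free resolution of $f^\perp$ via Buchsbaum-Eisenbud (Pfaffians of an alternating matrix of odd size). For socle degrees $2d=8,10,12$ this yields a short, explicit list of possible Hilbert functions for $A$. For each such Hilbert function, I will use the Pfaffian description together with the multiplication map $\Sym^2 A_d\to A_{2d}$ to exhibit a rank-$(d+1)$ PSD element in the Gram spectrahedron, either by locating a lower-rank extreme ray directly or by perturbing the given rank-$(d+2)$ Gram matrix along a carefully chosen direction in $\ker(\Sym^2 A_d\to A_{2d})$ while preserving positive semidefiniteness.

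The hardest cases will be those in which $\dim A_d$ is close to the maximum permitted by Diesel's conditions, because then the Gram pairing has the most degrees of freedom and a generic perturbation need not reduce the rank. For these I expect to need the combination of real-positivity and the explicit Pfaffian structure of $f^\perp$ to produce a one-parameter family of PSD Gram matrices in the Gram spectrahedron whose boundary limit has rank $d+1$. Executing this case analysis concretely for $d=4,5,6$ will be the technical heart of the proof; the remaining, more degenerate, Hilbert functions should admit easier rank-reduction arguments by dimension counting alone.
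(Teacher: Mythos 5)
There is a genuine gap, and it lies at the foundation of your plan: you apply the Gorenstein machinery to the wrong algebra. You take $A=R/f^\perp$, the apolar algebra of the form $f$ itself, and claim that SOS representations of $f$ correspond to PSD bilinear forms on $A_d$ realizing $f$ under the pairing $A_d\times A_d\to A_{2d}$, with SOS length equal to the minimum rank of such a form. This is false: since $A_{2d}\cong\bbR$, the condition of ``realizing $f$'' in $A_{2d}$ is a single scalar equation, so the fiber already contains rank-one PSD elements; and the summands $h_i$ of an actual decomposition $f=\sum h_i^2$ need not avoid $f^\perp_d$ (already $x^4+y^4=(x^2-\tfrac12y^2)^2+(xy)^2+\tfrac34y^4$ has the summand $xy\in f^\perp_2$), so the Gram spectrahedron, which lives in the fiber of $\Sym^2(R_d)\to R_{2d}$ over $f$, does not descend to $\Sym^2(A_d)$ with its rank intact. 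Worse, even if one repaired the correspondence, Diesel's classification applied to $f^\perp$ gives no traction: a form of length $d+2$ in $\mathrm{int}(\Sigma_{2d})$ typically has full-rank middle catalecticant, so $f^\perp$ has the generic Hilbert function $\min\bigl(\binom{i+2}{2},\binom{2d-i+2}{2}\bigr)$ and there is no ``short explicit list'' of cases to analyze.

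The paper instead assumes $\py(3,2d)=d+2$ and studies a \emph{generic point} $p=\sum_{i=0}^{d}h_i^2$ on the boundary of the image of the sum-of-$(d+1)$-squares map $\Phi_d$ lying in $\mathrm{int}(\Sigma_{2d})$. Sard's theorem forces $\langle h_0,\dots,h_d\rangle_{2d}$ to be a hyperplane, and the Gorenstein ideal fed into Diesel's classification is $I(L)$ for the functional $L$ cutting out that tangent hyperplane --- not $f^\perp$. The constraints $\dim I(L)_d\ge d+1$, codimension one of $\langle I(L)_d\rangle_{2d}$, and basepoint-freeness are exactly what reduce Diesel's list to one case for $2d=8$, three for $2d=10$, and seven for $2d=12$. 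The contradiction then comes not from perturbing a Gram matrix of $f$ but from two further ingredients your sketch lacks: a Euclidean-distance variational argument showing that a generic boundary point has length exactly $d+1$ and that $L(\sum a_i^2)\le 0$ for every syzygy $\sum a_ih_i=0$; and, case by case, either toric-variety Pythagoras-number bounds (or explicit syzygies) forcing the length down to $d$, or, for $2d=8$, second-order optimality forcing $-L\in\Sigma_8^*$, contradicting $p\in\mathrm{int}(\Sigma_8)$. Without passing to the boundary of $\im(\Phi_d)$ and to the tangent-hyperplane functional, neither the finite case list nor the contradiction mechanism is available.
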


More generally, we conjecture that $\py(3,2d)=d+1$ for all $d \geq 2$. However, the proof of our main result for $d = 4,5,6$ relies on a case analysis. For $2d = 10$, we have to discuss $3$ cases and for $2d = 12$ we have $7$. For $2d = 14$, the number already increases to $22$. A proof of the conjecture requires therefore a more systematic approach.

\subsection{Outline of the paper}
We sketch the key ingredients of our result. Let us fix the notation
\[\Sigma_{n,2d} = \left\{ f \in \bbR[x_1,x_2,\ldots, x_n]_{2d} \; \middle \vert \; f = \sum_{i = 1}^r h_i^2 \text{ for some } h_1,h_2,\ldots, h_r \in \bbR[x_1,x_2,\ldots, x_n]_d\right\}\]
for the cone of sums of squares so that the Pythagoras number can be formally defined as  

\[\begin{aligned}\py(n,2d)
&= \max \{r \; \vert \; \text{ There exists } f\in \Sigma_{n,2d} \text{ with length } r\}\\
 &= \min \left\{r \; \middle \vert \; \text{For each } f \in \Sigma_{n,2d} \text{ there exists } h_1,h_2,\ldots, h_r\in \bbR[x_1,\ldots, x_n]_d \text{ with } f = \sum_{i = 1}^r h_i^2\right\}.
 \end{aligned}\]

The key observation for our proof of Theorem \ref{thm:IntroMainThm} is a connection between elements $f \in \Sigma_{n,2d}$ which can be written as a sum of strictly fewer than $\py(n,2d)$ squares and Artinian Gorenstein algebras with socle degree $2d$. Recall that an Artinian Gorenstein algebra with socle degree $2d$ is a ring of the form $\bbR[x_1,x_2,\ldots, x_n]/I$, where the ideal $I$ is defined in terms of a linear functional $L$ on $\bbR[x_1,x_2,\ldots, x_n]_{2d}$ as follows:

\[ I = I(L) = \left\{f \in \bbR[x_1,x_2,\ldots, x_n] \; \vert \; \deg(f) > 2d \text{ or } L(fg) = 0 \text{ for all } g \in \bbR[x_1,x_2,\ldots, x_n]_{2d - \deg(f)}\right\}.\]
An ideal of this form is called a \emph{Gorenstein ideal} with socle degree $2d$. To make this connection, we consider the sum-of-$(d+1)$-squares map

\[\Phi_d:\bigoplus_{i = 0}^{d} \bbR[x_1,x_2,x_3]_d \to \bbR[x_1,x_2,x_3]_{2d} \qquad \Phi_d(h_0,h_1,\ldots, h_d) = \sum_{i = 0}^{d}h_i^2.\]
If we had $\py(3,2d) = d+2$, then the image of $\Phi_d$ would be strictly contained in $\Sigma_{3,2d}$. However, the image $\im(\Phi_d) \subseteq \bbR[x_1,x_2,x_3]_{2d}$ has nonempty interior so that its algebraic boundary has codimension $1$. At a general point $p = \sum_{i = 0}^dh_i^2$ on this boundary, the tangent space to $\im(\Phi_d)$ is the degree $2d$ piece of the ideal generated by the $h_i$. That is, $T_p(\im\Phi_d) = \langle h_0,h_1,\ldots h_d \rangle_{2d}$. We show in Section \ref{sec:GorensteinConstruct} that if $p = \sum_{i = 0}^dh_i^2$ is a generic point on the boundary of $\im(\Phi_d)$ which is also an interior point of the cone $\Sigma_{3,2d}$, then $T_p(\Phi_d)$ must have codimension one in $\bbR[x_1,x_2,x_3]_{2d}$ and the forms $h_i$ are contained in a Gorenstein ideal $I(L)$, where $L$ is a nonzero linear functional vanishing on the tangent hyperplane $\langle h_0,h_1,\ldots, h_d \rangle$.  

To derive a contradiction to the existence of such boundary points, we leverage the structure of the associated Gorenstein ideal. Gorenstein ideals in $\bbR[x_1,x_2,x_3]$ are classified by the Buchsbaum-Eisenbud structure theorem, and in \cite{Diesel1996Irreducibility}, Diesel provides a combinatorial characterization of all such ideals. 

In Section \ref{sec:Minimize}, we use a Euclidean distance problem to show that if $p = \sum_{i = 0}^d h_i^2$ is a generic point on the boundary of $\im(\Phi_d)$ which is also an interior point of $\Sigma_{3,2d}$, then there is $q \in \Sigma_{3,2d} \setminus \im(\Phi_d)$, such that $p$ is a local minimizer over $g \in \im(\Phi_d)$ of $\|q-g\|^2$. The optimality conditions for the minimization of $\|q-g\|^2$ above place strong restrictions on the syzygies of $h_0,h_1,\ldots, h_d$. Indeed, the linear functional $L(f) = \langle q-p,f \rangle$ must vanish on $\langle h_0,h_1,\ldots, h_d\rangle$ and $L(\sum_{i =0}^d a_i^2) \leq 0$ for any $a_0,a_1,\ldots, a_d \in \bbR[x_1,x_2,x_3]_d$ with $\sum_{i = 0}^da_ih_i = 0$. Moreover, if $p$ is a minimizer of the distance $\|q-g\|^2$, then $p$ must have length $d+1$. Similar results are proved in \cite{BMinPrep}, and here we combine them with structure results on Gorenstein ideals to derive a contradiction.

Diesel's results \cite{Diesel1996Irreducibility} provide an explicit list of Gorenstein ideals of socle degree $2d$. In Section \ref{sec:GorensteinDimensions}, we derive conditions on the dimensions of the graded pieces of the Gorenstein ideals associated to boundary points of $\im(\Phi_d)$ which lie in the interior of $\Sigma_{3,2d}$. For $2d = 8,10,12$, there are few possibilities for generator and relation degrees of Gorenstein ideals associated to boundary points of $\im(\Phi_d)$: the only possibility for degree $2d = 8$ is a complete intersection of a cubic and two quartics, there are three cases to consider for $2d = 10$, and seven cases for $2d = 12$. 

In Section \ref{sec:MainThmProof}, we consider these cases and derive contradictions with the optimality criteria discussed above. For the cases of $2d = 10$ and $2d = 12$, we show that the structure of the Gorenstein ideal associated to any point $p = \sum_{i = 0}^{d}h_i^2$ on the boundary of $\im(\Phi_d)$ enforces that $p$ has length at most $d$. To make this precise, we define the \emph{Pythagoras number} $\py(X)$ of a real projective variety $X \subseteq \bbP^{r-1}$ with homogeneous coordinate ring $R$ to be the minimum integer $s$ such that if $f \in R_2$ is a sum of squares of elements of $R_1$, then $f$ is a sum of squares of at most $s$ elements of $R_1$. This is consistent with the notion of Pythagoras number of forms as defined above since $\py(n,2d) = \py(\nu_d(\bbP^{n-1}))$, where $\nu_d$ is the $d^{th}$ Veronese embedding. 

In the degree $2d = 10$ and $2d = 12$ cases, we show that if $J$ is a candidate Gorenstein ideal then there is a basis $\{t_1,t_2,\ldots, t_r\}$ of $J_d$ as a real vector space, such the quadratic relations among the $t_i$ are the same as the quadratic relations in the defining ideal of a toric variety $X_K$. Moreover, $\py(X_K)$ is computable and provides an upper bound on the length of a sum of squares of elements of $\bbR[t_1,t_2,\ldots, t_r]_d$. 

Surprisingly, the $2d = 8$ case is more technical. Here, the only candidate Gorenstein ideal $J$ is defined by a complete intersection of a cubic and two quartics. The upper bound provided by the construction of toric variety as in the degree $2d = 10$ and $2d = 12$ cases is $5 = d+1$, which does not provide the desired contradiction. However, by leveraging the second-order optimality conditions and the structure of the complete intersection, we once again obtain a contradiction. 

\subsection{Notation and Conventions}

We fix $S = \bbR[x_1,x_2,x_3]$ and denote by $S_d \subseteq S$ the vector space of degree $d$ forms. Similarly, if $I \subseteq S$ is a homogeneous ideal, then $I_d\subseteq S_d$ is the $d^{th}$ graded piece.  If $h_0,h_1,\ldots, h_r \in S$, the ideal they generate is $\langle h_0,h_1,\ldots, h_r\rangle$. The cone of degree $2d$ forms which are sums of squares is $\Sigma_{2d}:=\Sigma_{3,2d}\subseteq S_{2d}$.

For a real vector space $V$, the dual vector space is $V^*$. If $K \subseteq V$ is a convex cone, then we write $K^* = \{L \in V^* \; \vert \; L(x) \geq 0 \text{ for all } x \in K\}$ for its dual convex cone. If $L \in V^*$ is nonzero, the hyperplane annihilated by $L$ is $L^\perp = \{v \in V \; \vert \; L(v) = 0\}$. For fixed $d \geq 0$, we identify $S_d$ with $S_d^*$ through the \emph{apolar inner product}, where $x_i$ acts by $\frac{\partial}{\partial x_i}$. 

If $W \subseteq \bbR^n$, we denote by $\overline{W}$ the closure of $W$ and $\int(W)$ the interior of $W$. If $W$ is closed, its boundary is $\partial W = W \setminus \int(W)$. 

\section{Pythagoras Number to Gorenstein Ideals}\label{sec:GorensteinConstruct}

In this section we consider the set of sums of squares of length at most $d+1$ and prove Proposition \ref{prop:IntroGorenstein}. Fix $S = \bbR[x_1,x_2,x_3]$. We follow a similar strategy to \cite{Blekherman2019LowRankSOSMinimalDegree,Blekherman2022QuadraticPersistence,Hilbert1888Ueber}. Define the map 

\begin{equation}\label{eq:Phi}
\Phi_d: \bigoplus_{i = 0}^{d} S_d \to S_{2d} \qquad \Phi_d(h_0,h_1,\ldots, h_{d}) = \sum_{i = 0}^{d}h_i^2.
\end{equation}

We observe that $\Phi_d$ induces a map on the projectivizations $\bbP(\bigoplus_{ i =0}^{d} S_d) \to \bbP(S_{2d})$ and therefore is closed and proper. The differential of $\Phi_d$ at the point $h = (h_0,h_1,\ldots, h_{d})$ is the map 

\begin{equation}\label{eq:dPhi}
{\rm d}_h\Phi_d: \bigoplus_{i = 0}^{d} S_d \to S_{2d} \qquad {\rm d}_h\Phi_d(g_0,g_1,\ldots, g_{d}) = 2\sum_{i = 0}^{d}h_ig_i.
\end{equation}

We first record the fact that the image of $\Phi_d$ has nonempty interior. In fact, it is known that that a general form $f\in S_{2d}$ has a representation as a sum of at most $2^2 = 4$ four \emph{complex} squares \cite[Theorem 4]{Froberg2012Waring} and that this implies that the set of sums of four squares of real forms of degree $d$ has nonempty interior in $\Sigma_{2d}$ \cite[Section 2]{Scheiderer2017SOSLengthRealForms}.    

\begin{lemma}[See e.g. \cite{Froberg2012Waring,Scheiderer2017SOSLengthRealForms}]\label{lem:Phi_has_interior}
For $d\geq 3$, $\mathrm{im}(\Phi_d)\subseteq S_{2d}$ has nonempty interior. 
\end{lemma}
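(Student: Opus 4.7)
The plan is to reduce the lemma to the case of sums of four squares and then invoke the two references cited in the statement. Because $d \geq 3$ forces $d+1 \geq 4$, the sum-of-four-squares map $\Psi \colon \bigoplus_{i=0}^{3} S_d \to S_{2d}$ defined by $\Psi(h_0,h_1,h_2,h_3) = \sum_{i=0}^3 h_i^2$ factors through $\Phi_d$: any $(h_0,h_1,h_2,h_3)$ in the domain of $\Psi$ may be padded to $(h_0,h_1,h_2,h_3,0,\ldots,0)$ in the domain of $\Phi_d$, with the same image. Hence $\mathrm{im}(\Psi) \subseteq \mathrm{im}(\Phi_d)$, and it suffices to show that $\mathrm{im}(\Psi)$ has nonempty Euclidean interior in $S_{2d}$.

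For this step I would cite the two results in the lemma statement. The Fr\"oberg--Ottaviani--Shapiro theorem \cite{Froberg2012Waring} asserts that a general ternary form of degree $2d$ admits a decomposition as a sum of four squares of degree $d$ forms with complex coefficients; equivalently, the complexification $\Psi_{\bbC}$ is dominant. Scheiderer explains in \cite[Section 2]{Scheiderer2017SOSLengthRealForms} how to transfer such a complex-generic rank statement to the real setting: one produces a real point $h \in \bigoplus_{i=0}^{3} S_d$ at which the real Jacobian of $\Psi$ is surjective onto $S_{2d}$, and then applies the implicit function theorem to conclude that $\Psi$ is open in a neighborhood of $h$, giving an open subset of $S_{2d}$ lying in $\mathrm{im}(\Psi)$.

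As a second route, one could try a direct differential argument without invoking \cite{Froberg2012Waring}. By \eqref{eq:dPhi}, the image of $\mathrm{d}_h\Phi_d$ equals $\langle h_0,\ldots,h_d\rangle_{2d}$, so it is enough to exhibit a real tuple for which this ideal coincides with $S$ in degree $2d$. For $d \geq 3$ one natural choice is to take $h_0,h_1,h_2$ to be a generic complete intersection of degree $d$ forms; its Artinian Gorenstein quotient $A = S/\langle h_0,h_1,h_2\rangle$ has socle degree $3(d-1) \geq 2d$ and Hilbert function $\dim A_{2d} = \binom{d-1}{2}$, and choosing $h_3,\ldots,h_d$ generic one verifies via a Hilbert-series computation that multiplication by these remaining forms kills $A_{2d}$.

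The main obstacle in either approach is the passage from a complex-generic (or algebraic-geometric) surjectivity statement to a real Euclidean open set. The citation-based route offloads this onto \cite{Scheiderer2017SOSLengthRealForms}, while the direct route requires a modest but explicit Hilbert-function calculation together with the implicit function theorem.
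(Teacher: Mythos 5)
Your primary argument is exactly the paper's: the paper records this lemma by citation only, observing that a general form is a sum of four complex squares by Fr\"oberg--Ottaviani--Shapiro and that Scheiderer's Section 2 transfers this to a nonempty real interior, with $d+1\geq 4$ handling the reduction to $\Phi_d$. Your sketched second route via a direct Jacobian/Hilbert-function computation is a reasonable alternative but is not what the paper does, and as written it would still need the surjectivity of multiplication by the generic $h_3,\ldots,h_d$ onto $A_{2d}$ to be verified.
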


It follows from Lemma \ref{lem:Phi_has_interior} that if $\py(3,2d) \geq d+2$, then $\partial\, \overline{\mathrm{int}(\im \Phi_d)} \cap \mathrm{int}(\Sigma_{2d}) \not = \emptyset$. 

We now prove Proposition \ref{prop:IntroGorenstein}, which states that if $p = \sum_{i = 0}^{d} h_i^2$ is a generic element on the boundary of the sums of squares of length at most $d+1$, then the ideal $I = \langle h_0,h_1,\ldots, h_{d}\rangle$ is contained in a Gorenstein ideal with socle degree $2d$ and $I_d$ is base point free.  

\begin{proposition}\label{prop:IntroGorenstein}
If $p = \sum_{i =0}^{d}h_i^2$ is a generic element of $\partial \,\overline{\mathrm{int}(\im \Phi )} \cap \mathrm{int}(\Sigma_{2d})$ and $I = \langle h_0,h_1,\ldots, h_d \rangle$ is the ideal generated by the forms $h_0,h_1,\ldots, h_d$, then $I_d$ is base point free and $I$ is contained in a Gorenstein ideal with socle degree $2d$. 
\end{proposition}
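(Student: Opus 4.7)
The plan is to examine the differential $\mathrm{d}_h\Phi_d$ at a preimage $h$ of a generic boundary point $p$, to deduce that its image $I_{2d}$ is a hyperplane in $S_{2d}$, to convert the annihilating functional into an apolar Gorenstein ideal containing $I$, and finally to rule out base points for $I_d$ using strict positivity of $p$ together with the hyperplane codimension count.

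First I would fix a generic $p = \sum_{i=0}^d h_i^2 \in \partial\,\overline{\mathrm{int}(\im \Phi_d)}\cap \mathrm{int}(\Sigma_{2d})$ and a preimage $h = (h_0,\ldots,h_d)$. By \eqref{eq:dPhi}, $\im(\mathrm{d}_h\Phi_d) = I_{2d}$, where $I = \langle h_0,\ldots,h_d\rangle$. I would show that $I_{2d}$ has codimension exactly one in $S_{2d}$. That $I_{2d}$ is a proper subspace follows from the implicit function theorem: if $\mathrm{d}_h\Phi_d$ were surjective, $p$ would be an interior point of $\im \Phi_d$, contradicting $p\in\partial\,\overline{\mathrm{int}(\im\Phi_d)}$. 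For the upper bound on codimension, Lemma \ref{lem:Phi_has_interior} ensures that the algebraic boundary of $\overline{\mathrm{int}(\im\Phi_d)}$ is a hypersurface in $S_{2d}$; stratifying the critical locus $\{h : \mathrm{d}_h\Phi_d \text{ is not surjective}\}$ by rank and applying the constant rank theorem, the image of each rank-$k$ stratum has dimension at most $k$, so a generic point of the boundary must admit a preimage at which $\mathrm{d}_h\Phi_d$ has rank exactly $\dim S_{2d}-1$. Making this genericity argument clean is where I expect the main technical difficulty.

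Once codimension one is in hand, I would pick a nonzero functional $L \in S_{2d}^*$ annihilating $I_{2d}$, unique up to scalar. Since $L\neq 0$, its apolar ideal $I(L)$, as defined in the introduction, is Gorenstein with socle degree exactly $2d$. To establish $I \subseteq I(L)$, it is enough to check each generator: for every $g\in S_d$, $h_i g \in I_{2d}\subseteq \ker L$, so $h_i \in I(L)_d$ and hence $I = \langle h_0,\ldots,h_d\rangle \subseteq I(L)$. This step is a clean consequence of the apolarity pairing.

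Finally I would rule out base points of $I_d$ by separating the real and complex cases. A real base point $q\in\bbP^2(\bbR)$ of $h_0,\ldots,h_d$ would force $p(q)=0$, contradicting $p\in\mathrm{int}(\Sigma_{2d})$, which implies strict positivity of $p$ on $\bbR^3\setminus\{0\}$. A non-real base point $q\in\bbP^2(\bbC)\setminus\bbP^2(\bbR)$ comes paired with its complex conjugate $\bar q$ (because the $h_i$ are real), and then every element of $I_{2d}$ must vanish at both $q$ and $\bar q$, confining $I_{2d}$ to a real subspace of codimension at least two in $S_{2d}$ and contradicting the codimension-one conclusion from the first step. Hence $I_d$ is base point free, completing the proof.
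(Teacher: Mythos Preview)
Your proposal is correct and follows essentially the same route as the paper: establish that $I_{2d}$ is a hyperplane at a generic boundary point (the paper invokes Sard's theorem and \cite{Sinn15AlgebraicBoundaries} where you use a rank stratification and the constant rank theorem), pass to the apolar Gorenstein ideal $I(L)$, and rule out base points. Your treatment of complex base points---arguing that a conjugate pair would force $\operatorname{codim} I_{2d}\geq 2$, contradicting the hyperplane conclusion already in hand---is in fact slightly cleaner than the paper's version, which instead argues that $p$ itself would lie in a codimension-two locus of $S_{2d}$.
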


\begin{proof}

If $\py(3,2d) = d+2$, then $\mathrm{im}(\Phi_d)$ is a proper subset of $\Sigma_{2d}$ which has interior in $S_{2d}$ by Lemma \ref{lem:Phi_has_interior}. Set $B = \partial \, \overline{\mathrm{int}(\mathrm{im}\Phi_d)}$ to be the boundary of the closure of the interior of the image of the map $\Phi_d$.  It follows from \cite{Sinn15AlgebraicBoundaries} that $\operatorname{codim}(B) = 1$ since $\int(\im \Phi_d)$ and $S_{2d} \setminus \int(\im \Phi_d)$ are both regular subsets of $S_{2d}$ equipped with the Euclidean topology, i.e., they are each contained in the closure of their interiors. 

By Sard's Theorem (see e.g; \cite{Sard1942MeasureCriticalValues},\cite[Theorem 9.6.2]{BCR1998RealAlgebraicGeometry}), at a general point $p = \sum_{i =0}^{d}h_i^2 \in B$, we have that $T_pB$ has dimension $\dim_{\bbR}S_{2d} - 1 = \binom{2 + 2d}{2d} - 1$ and that $\rk {\rm d}_h \Phi = \dim T_p B$. By the calculation \eqref{eq:dPhi}, this implies that $\langle h_0,h_1,\ldots, h_{d} \rangle_{2d} \subseteq S_{2d}$ is a hyperplane. Let $L \in S_{2d}^*$ be a nonzero linear functional annihilating the hyperplane $\langle h_0,h_1,\ldots, h_{d}\rangle$. Then $\langle h_0,h_1,\ldots, h_{d}\rangle$ is contained in the Gorenstein ideal $I(L)$, which has socle in degree $2d$. 

We now show that for general $p \in B \cap \mathrm{int}(\Sigma_{2d})$, we can take $\langle h_0,h_1,\ldots, h_{d} \rangle_d$ to be a base point free linear series. Suppose for the sake of contradiction that there is $z \in \cV_{\bbC}(h_1,h_2,\ldots, h_d)$.  If $z$ is a real base point of the $h_i$ then $p$ is not in the interior of $\Sigma_{2d}$. If $z$ is a complex base point, then so is its complex conjugate $\bar{z}$ since the $h_i$ are defined over $\bbR$. The set of polynomials which vanish at both $z$ and $\bar{z}$ is codimension $2$ so that a general $p\in B \cap \mathrm{int}(\Sigma_{2d})$ will not vanish at $z$ and $\bar{z}$.  
\end{proof}

\begin{remark}
While the results of this section are stated for ternary forms, the analogous statements hold for $n \geq 4$. In particular, for the case $(n,2d) = (5,4)$, we obtain the upper bound

\[\py(5,4) \leq \max \{\dim(J_2) \; \vert \; J \subseteq \bbR[x_1,x_2,\ldots, x_5] \text{ is Gorenstein with socle degree } 4\} + 1 \]
By \cite[Theorem 3.1]{Migliore2013GorensteinQuadrics}, this implies that $\py(5,4) \leq 8$. On the other hand, by considering the quadratic persistence \cite{Blekherman2022QuadraticPersistence} of the veronese embedding $\nu_2(\bbP^4)$, we see that $\py(5,4) \geq 7$. 
\end{remark}

\section{Minimizing the Distance}\label{sec:Minimize}

In this section, we show that a general element $p = \sum_{i = 0}^{d}h_i^2$ of the boundary of $\overline{\mathrm{int}(\im \Phi_d)}$ is the minimizer of some distance. More precisely, if $\py(3,2d)\geq d+2$, then there is $q \in \mathrm{int}(\Sigma_{2d})$ such that $p$ is the local minimizer over forms $g \in \overline{\mathrm{int}(\im(\Phi_d))}$ of the function $\|q-g\|^2$. We then relate this property to the syzygies of the forms $h_0,h_1,\ldots, h_{d}$. Throughout this section, we denote $M := \partial \, \overline{\mathrm{int}(\im(\Phi_d))} \bigcap \mathrm{int}(\Sigma_{2d})\subseteq S_{2d}$.

\begin{proposition}\label{prop:IntroArgmin}
If $p$ is a generic element of $M$, there exists an element $q \in  \mathrm{int}(\Sigma_{3,2d})$ and a neighborhood $U\subseteq M$ of $p$ such that $q$ cannot be written as a sum of fewer than $d+2$ squares and such that 

\begin{equation}\label{eq:distance_to_image}p = \arg \min_{g \in U} \|q - g\|^2. 
\end{equation}
\end{proposition}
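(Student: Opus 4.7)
The plan is to take $q$ to be a small perturbation of $p$ in the direction of the outward normal to the boundary hypersurface $B := \partial\,\overline{\int(\im\Phi_d)}$ at $p$, and then verify the claimed local minimality by a tubular neighborhood argument. By (the proof of) Proposition~\ref{prop:IntroGorenstein}, at a generic $p = \sum_{i=0}^{d} h_i^2 \in M$ the image of ${\rm d}_h \Phi_d$ is the hyperplane $T_pB = \langle h_0,h_1,\ldots,h_d\rangle_{2d} \subseteq S_{2d}$, so $B$ is smooth at $p$ as a real hypersurface; generic smoothness of the semialgebraic boundary can be extracted from \cite{Sinn15AlgebraicBoundaries}. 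Let $N \in S_{2d}$ be the unit vector in the apolar inner product that is orthogonal to $T_pB$ and points away from $\int(\im\Phi_d)$.

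Next I would define $q_t = p + tN$ for small $t > 0$ and show $q_t \in \int(\Sigma_{2d}) \setminus \im(\Phi_d)$. Since $\int(\Sigma_{2d})$ is open and $p \in \int(\Sigma_{2d})$, we get $q_t \in \int(\Sigma_{2d})$ for $t$ small. Because $N$ is the outward normal at a smooth point of $B$, the point $q_t$ lies outside $\overline{\int(\im\Phi_d)}$ for $t > 0$ small. Since $\Phi_d$ descends to a map on projective spaces, $\im(\Phi_d)$ is closed, and the difference $\im(\Phi_d) \setminus \overline{\int(\im\Phi_d)}$ is a semialgebraic set of strictly smaller dimension; for generic $p$ and generic small $t$, the one-dimensional ray $p + tN$ avoids it, so $q_t \notin \im(\Phi_d)$. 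This means $q_t$ cannot be written as a sum of fewer than $d+2$ squares.

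Local minimality then follows from a tubular neighborhood argument: at a smooth point of the real hypersurface $B$ there is an open neighborhood $V$ of $p$ in $S_{2d}$ on which the nearest-point projection onto $B \cap V$ is well-defined, unique, and lies along the normal line. For $t$ small enough that $q_t \in V$, this projection sends $q_t$ to $p$. Explicitly, writing $B$ locally as the graph of a function $f$ over $T_pB$ with $f(0) = 0$ and $\nabla f(0) = 0$, a direct expansion gives $\|q_t - g\|^2 = t^2 + x^{\top}(I - tH_f(0))x + O(\|x\|^3)$ for $g \in B$ corresponding to local coordinate $x$, which strictly exceeds $t^2 = \|q_t - p\|^2$ for $x \neq 0$ as soon as $t < 1/\|H_f(0)\|$. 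Choosing $U$ to be a neighborhood of $p$ inside $B \cap V \cap \int(\Sigma_{2d}) \subseteq M$ (possible since $p \in \int(\Sigma_{2d})$) completes the construction.

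The main obstacle I expect is justifying the smoothness of the semialgebraic set $B$ at generic $p \in M$ as a real manifold, since both the outward normal direction and the tubular neighborhood argument rest on this fact. Proposition~\ref{prop:IntroGorenstein} alone only produces a well-defined tangent hyperplane, so the full smoothness step needs a semialgebraic stratification argument for the boundary of the full-dimensional set $\overline{\int(\im\Phi_d)}$, together with a further genericity reduction ensuring that $p$ lies in the top-dimensional smooth stratum of $B$ rather than on its singular locus.
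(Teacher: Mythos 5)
Your proposal is correct and takes essentially the same route as the paper: the paper likewise sets $q = p + \epsilon f$ where $f$ is apolar-dual to the linear functional annihilating the tangent hyperplane $\langle h_0,\ldots,h_d\rangle_{2d}$, chooses the sign of $f$ so that $q \notin \im(\Phi_d)$, and invokes openness of $\mathrm{int}(\Sigma_{2d})$ for small $\epsilon$. The only difference is one of detail: you make explicit the tubular-neighborhood/second-fundamental-form expansion for local minimality and the genericity needed to avoid $\im(\Phi_d)\setminus\overline{\mathrm{int}(\im\Phi_d)}$, both of which the paper compresses into a single sentence.
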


\begin{proof}
Let $p = \sum_{i = 0}^{d}h_i^2$ be a generic element of $M$ so that $H:= \langle h_0,h_1,\ldots, h_{d} \rangle_{2d}$ has codimension $1$ in $S_{2d}$. Let $L \in S_{2d}^*$ be the linear functional which vanishes on $H$, and $f \in S_{2d}$ be such that $L(g) = \langle f, g \rangle$ for all $g \in S_{2d}$. Since $p \in M$, either $p+\epsilon f$ or $p-\epsilon f$ is not an element of $\im(\Phi_d)$ for small $\epsilon > 0$. So, replacing $f$ with $-f$ if necessary, it follows that $q = p + \epsilon f$ cannot be written as a sum of fewer than $d+2$ squares and $q \in \mathrm{int}(\Sigma_{2d})$ for $\epsilon > 0$ small enough . Moreover, since $L$ vanishes on $H$, there is a neighborhood $p \in U \subseteq M$ such that $ p = \arg \min_{g \in U}\|q-g\|^2$. 
\end{proof}

Our primary application of Proposition \ref{prop:IntroArgmin} is to understand the syzygies of the forms $h_0,h_1,\ldots, h_{d}$ when $p = \sum_{i = 0}^{d}h_i^2$ is a generic element of the boundary of $M$. To do so, we examine the first and second-order optimality conditions in the problem \eqref{eq:distance_to_image}. We first need the following technical lemma regarding second-order optimality conditions.

\begin{lemma}\label{lem:second_order_opt}
Let $Q$ and $R$ be symmetric $n\times n$ matrices with $Q \succeq 0$. Suppose that $Q + \epsilon R \succeq 0$ for $\epsilon \geq 0$ sufficiently small. Then, the restriction of $R$ to $\ker(Q)$ is positive semidefinite and if $v^\top R v = v^\top Q v = 0$, then $Rv = 0$.  
\end{lemma}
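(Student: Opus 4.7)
The plan is to extract first- and second-order information from the hypothesis $Q + \epsilon R \succeq 0$ for small $\epsilon \geq 0$ by evaluating the associated quadratic form on cleverly chosen test vectors. The first claim follows immediately from a one-parameter perturbation: for any $v \in \ker Q$, we have $v^\top(Q + \epsilon R)v = \epsilon\, v^\top R v \geq 0$ for arbitrarily small $\epsilon > 0$, which forces $v^\top R v \geq 0$. Thus the restriction of $R$ to $\ker Q$ is positive semidefinite.

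For the second claim, the first observation is that $v^\top Q v = 0$ combined with $Q \succeq 0$ forces $Qv = 0$, so $v \in \ker Q$; hence $v$ lies in the kernel of the restricted PSD form $R|_{\ker Q}$. To upgrade this to $Rv = 0$, the idea is to perturb $v$ in an arbitrary direction. Pick any $u \in \bbR^n$ and plug $v + tu$ into the hypothesis, obtaining
\[
(v+tu)^\top (Q + \epsilon R)(v+tu) \geq 0 \quad \text{for all } t \in \bbR \text{ and all small } \epsilon > 0.
\]
Expanding and using $Qv = 0$ along with $v^\top R v = 0$, the constant term vanishes and the inequality reduces to
\[
t^2 \bigl(u^\top Q u + \epsilon\, u^\top R u\bigr) + 2 \epsilon\, t\, u^\top R v \geq 0.
\]

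The leading coefficient in $t$ is nonnegative for small $\epsilon > 0$: if $u \notin \ker Q$, then $u^\top Q u > 0$ dominates the $\epsilon$-term, while if $u \in \ker Q$, then $u^\top Q u = 0$ and $u^\top R u \geq 0$ by the first part of the lemma. A real quadratic $at^2 + bt$ with $a \geq 0$ is nonnegative for every $t \in \bbR$ only if $b = 0$, so $u^\top R v = 0$. Since $u$ was arbitrary, this gives $Rv = 0$. I do not expect a genuine obstacle here; the argument is the standard conversion of a matrix semidefiniteness condition into pointwise quadratic inequalities, and the only mildly delicate point is verifying the sign of the leading coefficient on both the $u \in \ker Q$ and $u \notin \ker Q$ branches, which is precisely where the first part of the lemma feeds into the second.
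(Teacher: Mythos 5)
Your proof is correct. Part (1) is identical to the paper's argument. For part (2) you do more work than the paper, and the extra work is genuinely needed: the paper disposes of this claim in one sentence, asserting that $Rv=0$ ``since $R\succeq 0$ on $\ker Q$,'' but positive semidefiniteness of $R$ restricted to $\ker Q$ together with $v^\top Rv=0$ only yields $u^\top Rv=0$ for $u\in\ker Q$, i.e.\ that $Rv$ is orthogonal to $\ker Q$ --- not that $Rv=0$ in $\bbR^n$. Your expansion of $(v+tu)^\top(Q+\epsilon R)(v+tu)$ for \emph{arbitrary} $u\in\bbR^n$ is exactly what closes this gap, since it kills $u^\top Rv$ for every $u$. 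Two small remarks: your case analysis on the leading coefficient is unnecessary, because $u^\top Qu+\epsilon u^\top Ru=u^\top(Q+\epsilon R)u\geq 0$ directly from the hypothesis; and the whole of part (2) can be compressed by noting that $v^\top(Q+\epsilon R)v=0$ with $Q+\epsilon R\succeq 0$ forces $(Q+\epsilon R)v=0$, whence $\epsilon Rv=-Qv=0$. That shortcut is really the same computation you performed, packaged as the standard fact that a PSD matrix annihilates any vector on which its quadratic form vanishes.
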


\begin{proof}
Suppose that $Q + \epsilon R \succeq 0$ for $\epsilon \geq 0$ sufficiently small. Let $v \in \ker Q$. Then, $0 \leq v^\top(Q + \epsilon R)v = \epsilon v^\top R v$. So, $R \succeq 0$ on $\ker Q$. It then follows that if $v^\top Rv = v^\top Q v = 0$, then $v \in \ker Q$ since $Q \succeq 0$ and therefore $Rv = 0$ since $R \succeq 0$ on $\ker Q$.  
\end{proof}

We can now prove the main proposition of this section which relates optimality conditions for the problem \eqref{eq:distance_to_image} to syzygies of the forms $h_0,h_1,\ldots, h_{d}$. 

\begin{proposition}\label{prop:Nonneg_SOS_Syzygies}
Let $p = \sum_{i = 0}^{d}h_i^2$ be a general element of $M$ and let $q \in S_{2d}$ be as in Proposition \ref{prop:IntroArgmin}. Set $L \in S_{2d}^*$ to be the linear functional $L(g) = \langle q-p, g\rangle$. Then,

\begin{enumerate}
\item $L(g) = 0$ for all $g \in \langle h_0,h_1,\ldots, h_{d}\rangle_{2d}$.
\item If $a_0,a_1,\ldots, a_{d} \in S_d$ are such that $\sum_{i = 0}^{d}a_ih_i = 0$, then $L\left(\sum_{i = 0}^{d} a_i^2\right) \leq 0$. If, in addition, $L\left(\sum_{i = 0}^{d} a_i^2\right) = 0$, then $a_i \in I(L)_d$ for each $i = 0,1,\ldots, d$.  
\item $\pm L \not \in \Sigma_{2d}^*$
\item The length of $p$ is $d+1$.
\end{enumerate} 
\end{proposition}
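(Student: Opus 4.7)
The plan is to extract first- and second-order optimality conditions from the distance minimization of Proposition \ref{prop:IntroArgmin}. Throughout, write $H := \langle h_0, h_1, \ldots, h_d\rangle_{2d}$, which is a hyperplane in $S_{2d}$ by Proposition \ref{prop:IntroGorenstein}. Part (1) is the first-order optimality condition at $p$: the gradient $2(p-q)$ of $\|q-\cdot\|^2$ must be orthogonal to $T_p\,\mathrm{im}(\Phi_d) = H$, so $L(g) = \langle q-p, g\rangle = 0$ for every $g \in H$. Part (3) is a short convexity argument: $p = \sum_i h_i^2 \in H$ and $L|_H = 0$ give $L(p) = 0$, so if $\pm L \in \Sigma_{2d}^*$, then $\{L = 0\}$ would be a supporting hyperplane of $\Sigma_{2d}$ at $p$ and push $p$ onto a proper face, contradicting $p \in \mathrm{int}(\Sigma_{2d})$.

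For (2), I would exploit syzygy perturbations to isolate the purely quadratic part. When $(a_0, \ldots, a_d)$ is a syzygy, the identity $\Phi_d(h + ta) = p + t^2 \sum_i a_i^2$ gives a curve in $\mathrm{im}(\Phi_d)$ through $p$ with vanishing first-order velocity, and local optimality forces $L(\sum_i a_i^2) \leq 0$. For the equality case, enlarge to the two-parameter family $\Phi_d(h + ta + s\mu)$ with $\mu \in \bigoplus_{i=0}^d S_d$ arbitrary. Using $L(\sum_i h_i \mu_i) = 0$, a second-order Taylor expansion of $\|q - \Phi_d(h + ta + s\mu)\|^2 - \|q-p\|^2$ yields a quadratic form in $(t,s)$ whose matrix
\[
\begin{pmatrix}
-L(\sum_i a_i^2) & -L(\sum_i a_i \mu_i) \\
-L(\sum_i a_i \mu_i) & 2\|\sum_i h_i \mu_i\|^2 - L(\sum_i \mu_i^2)
\end{pmatrix}
\]
must be positive semidefinite, precisely the setup for Lemma \ref{lem:second_order_opt}. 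With $L(\sum_i a_i^2) = 0$, the determinant condition collapses to $L(\sum_i a_i \mu_i) = 0$ for all $\mu$; specializing $\mu$ to a single coordinate then gives $L(a_j g) = 0$ for every $g \in S_d$, i.e., $a_j \in I(L)_d$.

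Part (4) is the subtlest. The bound $\mathrm{length}(p) \leq d+1$ is immediate. For the reverse, suppose $\mathrm{length}(p) = r \leq d$ and fix a minimal decomposition $p = \sum_{j=1}^r g_j^2$. Then $(g_1, \ldots, g_r, 0, \ldots, 0) \in \bigoplus_{i=0}^d S_d$ is a preimage of $p$ under $\Phi_d$ with at least one zero slot; replacing that slot by an arbitrary $k \in S_d$ shows that $p + k^2 \in \mathrm{im}(\Phi_d)$ for every $k$. If $p$ is additionally a local minimizer of $\|q - g\|^2$ over $g \in \mathrm{im}(\Phi_d)$ near $p$, then rescaling $k \to \tau k$ and letting $\tau \to 0$ in the resulting optimality inequality forces $L(k^2) \leq 0$ for every $k \in S_d$, hence $-L \in \Sigma_{2d}^*$, contradicting (3). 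The main obstacle is exactly this promotion: Proposition \ref{prop:IntroArgmin} only guarantees minimality over the boundary subset $M$, so I would need to argue that with $q = p + \epsilon f$ for small $\epsilon$ and $f$ normal to $H$, the orthogonal projection of $q$ onto the full image $\mathrm{im}(\Phi_d)$ still lands at $p$. This should follow from the one-sided local structure of $\mathrm{im}(\Phi_d)$ relative to its tangent hyperplane at a generic smooth boundary point, and is the delicate geometric ingredient of the argument.
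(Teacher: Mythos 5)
Your proposal is correct and follows essentially the same route as the paper: first-order orthogonality for (1), the syzygy curve $\Phi_d(h+ta)=p+t^2\sum a_i^2$ plus the second-order condition via Lemma \ref{lem:second_order_opt} for (2), the supporting-hyperplane argument for (3), and the perturbation $p+\epsilon a^2$ (which you phrase contrapositively as $L(k^2)\leq 0$ for all $k$) for (4). The one point you flag as delicate --- promoting minimality over $U\subseteq M$ to minimality over a neighborhood in $\im(\Phi_d)$ --- is indeed used silently by the paper as well, and your sketch of it (the closure of the interior lies locally on one side of the smooth boundary hypersurface at a generic point, with $q-p$ normal to it) is the right justification.
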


\begin{proof}
We examine perturbations of $p$. For $\epsilon \in \bbR$, let $p_\epsilon = \sum_{i = 0}^{d}\left(h_i + \epsilon a_i + \epsilon^2 b_i\right)^2$, where $a_i,b_i \in S_d$ for $i = 0,1,\ldots, d$.  Then,

\[\begin{aligned}
\langle q - p_\epsilon, q-p_{\epsilon} \rangle &= \langle q - p, q- p \rangle - 4\epsilon\left\langle q-p, \sum_{i = 0}^{d}h_ia_i\right\rangle - 2\epsilon^2 \left\langle q-p, \sum_{i = 0}^{d}a_i^2 + 2h_ib_i\right\rangle + \epsilon^2 \left \langle \sum_{i = 0}^{d}h_ia_i,\sum_{i = 0}^{d} h_ia_i \right\rangle
\\
&= L(q-p)- 4\epsilon L\left(\sum_{i = 0}^{d}h_ia_i\right) - 2\epsilon^2 L\left(\sum_{i = 0}^{d}a_i^2 + 2h_ib_i\right) + \epsilon^2 \left \langle \sum_{i = 0}^{d}h_ia_i,\sum_{i = 0}^{d} h_ia_i \right\rangle
\end{aligned}\]

Since $p$ is a local minimum of the distance to $q$, it follows from first order optimality that $L(\sum_{i = 0}^{d}h_ia_i)= 0$ for any choice of $a_0,a_1,\ldots, a_{d} \in S_d$, proving the first claim.  If we further take $a_0,a_1,\ldots, a_{d} \in S_d$ such that $\sum_{i = 0}^{d}h_ia_i = 0$, then 

\[\langle q-p_\epsilon, q-p_\epsilon \rangle = L(q-p) - 2\epsilon^2 L\left(\sum_{i = 0}^{d }a_i^2\right).\]

It then follows that $L(\sum_{i =0}^{d}a_i^2) \leq 0$. By applying Lemma \ref{lem:second_order_opt} to the quadratic forms $Q(a_0,\ldots, a_{d}) = \left \langle \sum_{i = 0}^{d} h_ia_i, \sum_{i = 0}^{d}h_ia_i\right \rangle$ and $R(a_0,\ldots, a_{d}) = -2L(\sum_{i = 0}^{d}a_i^2)$ on $S_d^{d+1}$, it follows that if $L(\sum_{i = 0}^{d}a_i^2) = 0$, then $-L(\sum_{i = 0}^{d}a_ig_i) =0$ for any $(g_0,g_1,\ldots, g_{d}) \in S_d^{d+1}$. So, each $a_i \in I(L)_d$. 

For part $(3)$, note that since $p \in \langle h_0,h_1,\ldots, h_d\rangle$, it follows from part $(1)$ that $L(p) = -L(p) = 0$. Since $p \in \int(\Sigma_{2d})$, the hyperplane $L^\perp$ cannot be a supporting hyperplane of $\Sigma_{2d}$. Therefore $\pm L \not \in \Sigma_{2d}^*$. 

For part $(4)$, suppose for the sake of a contradiction that $p$ has length at most $d$ so that $p = \sum_{i = 0}^{d-1}g_i^2$ for some $g_i \in S_d$. Then, since $\pm L \not \in \Sigma_{2d}^*$, there exists $a \in S_{d}$ such that $L(a^2) > 0$. It then follows that for each $\epsilon > 0$, 

\[\langle q-p,q-(p+\epsilon a^2)\rangle = \|q-p\|^2 - \epsilon L(a^2) < \|q-p\|^2,\]
a contradiction with the construction of $q$. 
\end{proof}

Proposition \ref{prop:Nonneg_SOS_Syzygies} suggests methods for eliminating possible Gorenstein ideals from consideration. In particular, given a potential Gorenstein ideal $I = I(L)$, we will show that if $p = \sum_{i = 0}^{d}h_i^2$ for some $h_0,h_1,\ldots, h_{d} \in I_d$ such that $\langle h_0,h_1,\ldots, h_{d}\rangle_{2d} = I_{2d}$, then either $p$ has length $d$ or that if the stationarity conditions (1) and (2) hold then $L \in \Sigma_{2d}^*$.

In the remainder of this section, we further investigate the syzygies of the forms $h_0,h_1,\ldots, h_{d}$. First, we show that the $h_i$ must be linearly independent. 

\begin{lemma}\label{lem:L_nonneg_q_LI}
Fix $h_0,h_1,\ldots, h_r \in S_d$. Suppose that $L \in S_{2d}^* \setminus \Sigma_{2d}^*$ is a linear functional such that $L(\sum_{i =0}^r a_i^2) \geq 0$ for any $a_0,a_1,\ldots, a_r\in S_{d}$ which satisfy $\sum_{i = 0}^{r}h_ia_i = 0$. Then, $h_0,h_1,\ldots, h_r$ are linearly independent. 
\end{lemma}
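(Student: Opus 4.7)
The plan is to argue by contradiction: assume that $h_0, h_1, \ldots, h_r$ are linearly dependent and exhibit a family of syzygies that forces $L$ to be nonnegative on every square of a degree $d$ form, and hence on all of $\Sigma_{2d}$.

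Concretely, suppose there exist real constants $c_0, c_1, \ldots, c_r$, not all zero, with $\sum_{i=0}^r c_i h_i = 0$. The first step is to observe that for an arbitrary $b \in S_d$, multiplying this relation by $b$ yields the syzygy $\sum_{i=0}^r (c_i b) h_i = 0$, so the tuple $(a_0, \ldots, a_r) := (c_0 b, \ldots, c_r b)$ is admissible in the hypothesis on $L$. Applying the hypothesis gives
\[
0 \leq L\!\left(\sum_{i=0}^r (c_i b)^2\right) = \left(\sum_{i=0}^r c_i^2\right) L(b^2).
\]
Since the $c_i$ are not all zero, $\sum_{i=0}^r c_i^2 > 0$, so $L(b^2) \geq 0$ for every $b \in S_d$.

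The second step is to promote this to nonnegativity on $\Sigma_{2d}$. Any $f \in \Sigma_{2d}$ has the form $f = \sum_{j} b_j^2$ with $b_j \in S_d$, and linearity gives $L(f) = \sum_j L(b_j^2) \geq 0$. Hence $L \in \Sigma_{2d}^*$, contradicting the standing assumption. Therefore no nontrivial linear dependence among the $h_i$ can exist, proving the lemma.

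There is essentially no obstacle here; the argument is a one-line reduction once the key trick, namely multiplying a purely numerical linear dependence by an arbitrary $b \in S_d$ to manufacture a syzygy inside $S_d^{r+1}$, is spotted.
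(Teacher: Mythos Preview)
Your proof is correct and is essentially identical to the paper's own argument: both assume a nontrivial scalar relation $\sum c_i h_i = 0$, multiply it by an arbitrary $b\in S_d$ to obtain a syzygy, and deduce $L(b^2)\geq 0$ for all $b$, forcing $L\in\Sigma_{2d}^*$. The only cosmetic difference is that the paper normalizes $\sum c_i^2 = 1$ whereas you divide by $\sum c_i^2$ at the end.
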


\begin{proof}
Suppose for the sake of a contradiction that $a_0,a_1,\ldots, a_r \in \bbR$ are not all zero and $\sum_{i = 0}^r a_ih_i = 0$ and $\sum_{i = 0}^r a_i^2 = 1$. Let $f \in S_{d}$ be arbitrary. Then, $\sum_{i = 0}^r (a_if) h_i = 0$ and therefore $L(\sum_{i = 0}^r(a_if)^2) = L(f^2) \geq 0$, which implies that $L \in \Sigma_{2d}^*$.
\end{proof}

Next, we show that the presence of a nontrivial syzygy of the form $\sum_{0\leq i\leq j \leq r}\alpha_{i,j}h_ih_j = 0$ with each $\alpha_{i,j} \in \bbR$ certifies that the length of $\sum_{i =0}^r h_i^2$ is at most $r$. 

\begin{lemma}\label{lem:syzygy_in_ideal}
    Suppose that $h_0,h_1,\ldots, h_r \in S_d$ and that there are $\alpha_{i,j} \in \bbR$ for $0 \leq i \leq j\leq r$, not all zero, such that $\sum_{0\leq i\leq j \leq r}\alpha_{i,j}h_ih_j = 0$. Then, the length of $\sum_{i = 0}^rh_i^2$ is at most $r$. 
\end{lemma}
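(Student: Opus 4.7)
The plan is to view the quadratic syzygy as a direction along which we can deform the identity Gram matrix until it becomes singular, yielding a sum-of-squares representation with one fewer term.

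Concretely, assemble the coefficients into a symmetric matrix $A \in \bbR^{(r+1)\times(r+1)}$ by setting $A_{ii} = \alpha_{ii}$ and $A_{ij} = A_{ji} = \alpha_{ij}/2$ for $i < j$. Writing $h = (h_0, h_1, \ldots, h_r)^\top$ as a column vector of forms in $S_d$, the hypothesis $\sum_{0 \leq i \leq j \leq r} \alpha_{ij} h_i h_j = 0$ translates into the identity $h^\top A h = 0$ in $S_{2d}$. Consequently, for every $t \in \bbR$,
\[ \sum_{i=0}^r h_i^2 \;=\; h^\top h \;=\; h^\top (I + tA)\, h. \]

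The matrix $A$ is symmetric and nonzero by hypothesis, so it has at least one nonzero eigenvalue. Hence the set $T = \{t \in \bbR : I + tA \succeq 0\}$ is a closed interval containing $0$ and possessing at least one finite endpoint $t_0$. At such a boundary point, $I + t_0 A$ is positive semidefinite but singular, so its rank is at most $r$. Factor $I + t_0 A = \sum_{k=1}^r v_k v_k^\top$ with $v_k \in \bbR^{r+1}$; then
\[ \sum_{i=0}^r h_i^2 \;=\; h^\top(I + t_0 A)\, h \;=\; \sum_{k=1}^r (v_k^\top h)^2, \]
exhibiting $\sum_i h_i^2$ as a sum of at most $r$ squares of elements of $S_d$, as desired.

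I do not anticipate any serious obstacle; the only point to verify is that $A \neq 0$ so that the interval $T$ has a finite boundary, and this is immediate from the assumption that not all the $\alpha_{ij}$ vanish. The whole argument is essentially a pencil-of-Gram-matrices trick, which is the natural way to convert a quadratic relation among the $h_i$ into a length reduction.
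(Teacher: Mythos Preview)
Your argument is correct and is essentially the same pencil-of-Gram-matrices trick as the paper's proof: form the symmetric matrix $A$, note $\sum_i h_i^2 = h^\top(I+tA)h$ for all $t$, and choose $t$ so that $I+tA$ drops rank. Your write-up is in fact a bit more careful than the paper's, which simply asserts that some $t$ makes $I-tA$ rank-deficient without explicitly saying that one should take the boundary of the positive semidefinite interval so that the resulting low-rank matrix is still PSD and hence factors as $\sum_k v_k v_k^\top$.
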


\begin{proof}
Let $A$ be the symmetric matrix representative of the quadratic form $\sum_{0\leq i\leq j\leq r} \alpha_{i,j}h_ih_j$. Then,

\[\sum_{i = 0}^{r}h_i^2 = \begin{bmatrix} h_0 & h_1 & \dots & h_r\end{bmatrix}\left(I - tA\right)\begin{bmatrix} h_0 & h_1 & \dots & h_r\end{bmatrix}^\top.\]
Since $A$ is symmetric and not the zero matrix, there is $t \in \bbR$ such that $(I-tA)$ has rank at most $r$. 
\end{proof}

\section{Height Three Gorenstein Algebras}\label{sec:GorensteinDimensions}

In this section, we investigate the possible structures of Gorenstein ideals $J\subseteq S$ with socle degree $2d$ such that $\langle J_d \rangle_{2d} \subseteq S_{2d}$ is a hyperplane, and which contain a base point free linear series in degree $d$. As a starting point, we recall results on the structure of height 3 Gorenstein ideals. A first construction of Gorenstein ideals comes from complete intersections.

\begin{theorem}[{\cite[Theorem CB8]{Eisenbud1996CBTheorems}}] \label{thm:CB8}
If $q_0,q_1,q_2 \in S$ with $\deg(q_i) = d_i$ have no common zeros (real or complex), then the complete intersection $\langle q_0,q_1,q_2 \rangle$ is a Gorenstein ideal with socle in degree $d_0 + d_1 + d_2 - 3$. 
\end{theorem}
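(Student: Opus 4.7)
The plan is to verify both the Gorenstein property and the socle degree using the Koszul complex of $q_0,q_1,q_2$. First, I would upgrade the hypothesis to the algebraic statement that $q_0,q_1,q_2$ form a regular sequence in $S$. Since the only common zero of the $q_i$ in $\bbC^3$ is the origin, the ideal $I = \langle q_0,q_1,q_2 \rangle$ is $\mathfrak{m}$-primary for $\mathfrak{m} = \langle x_1,x_2,x_3\rangle$, so $S/I$ is Artinian of Krull dimension zero. Because $S$ is Cohen--Macaulay of dimension three, any such system of parameters is automatically a regular sequence.

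Given this, the Koszul complex on $q_0,q_1,q_2$ is a graded minimal free resolution of $S/I$:
\[ 0 \to S(-d_0-d_1-d_2) \to \bigoplus_{0 \leq i<j\leq 2} S(-d_i-d_j) \to \bigoplus_{i=0}^2 S(-d_i) \to S \to S/I \to 0.\]
The crucial feature is that the final term is a twist of $S$ of rank one. This is the standard criterion for $S/I$ to be Gorenstein: by local duality, $\omega_{S/I} \cong \mathrm{Ext}^3_S(S/I,\omega_S)$, which can be computed by dualizing the Koszul resolution. Using $\omega_S = S(-3)$, one obtains $\omega_{S/I} \cong (S/I)(d_0+d_1+d_2-3)$, a free $S/I$-module of rank one, hence $S/I$ is Gorenstein.

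To pin down the socle degree I would read off the Hilbert series of $S/I$ directly from the Koszul resolution:
\[ h_{S/I}(t) \;=\; \frac{(1-t^{d_0})(1-t^{d_1})(1-t^{d_2})}{(1-t)^3} \;=\; \prod_{i=0}^{2} (1+t+\cdots+t^{d_i-1}).\]
This polynomial is symmetric (as it must be for a Gorenstein algebra) and its top nonzero degree is $\sum_{i=0}^{2}(d_i-1) = d_0+d_1+d_2-3$. Since $S/I$ is Artinian Gorenstein, its one-dimensional socle is concentrated in this top degree, establishing the claim.

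There is no real obstacle: the argument is a direct application of standard commutative algebra (the Koszul resolution of a regular sequence together with the duality formula for the canonical module of a complete intersection). The only minor subtlety is the reduction from the geometric hypothesis to the regular sequence property, which is routine given the dimension count.
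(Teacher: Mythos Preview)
Your argument is correct and is the standard proof of this classical fact: the Koszul resolution of a regular sequence in a regular ring of dimension three has a rank-one tail, so the quotient is Gorenstein, and the Hilbert series factorization reads off the socle degree. The only step worth a second glance is the passage from ``no common zeros'' to ``regular sequence,'' and you handle that correctly via the Cohen--Macaulay property of $S$.

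There is nothing to compare against here: the paper does not prove \Cref{thm:CB8} at all but simply quotes it from \cite{Eisenbud1996CBTheorems}. Your write-up supplies exactly the kind of self-contained justification that the cited reference would provide, so in effect you have reproduced the omitted external proof rather than diverged from anything in the paper.
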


Theorem \ref{thm:CB8} imposes the following conditions on low degree generators for Gorenstein ideals $J$ constructed as in Proposition \ref{prop:IntroGorenstein}. First, we obtain a lower bound on the minimal generator degree of $J$.

\begin{proposition}\label{prop:No_degree_two_gens}
Fix $d \geq 3$. If $J \subseteq S$ is a Gorenstein ideal with socle in degree $2d$ which contains a base point free linear series $W$ in degree $d$, then $J$ has no generators of degree 2. 
\end{proposition}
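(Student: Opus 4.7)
The plan is to argue by contradiction: assume a minimal generator $q \in J_2$ exists, produce a complete intersection ideal $J' \subseteq J$ with socle degree $2d-1$ (one less than that of $J$), and then contradict Gorenstein symmetry of $J$ by comparing Hilbert functions of $S/J'$ and $S/J$ in degree $2d-1$.

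First, I would use the base point freeness of $W \subseteq J_d$ to select $f_2, f_3 \in W$ so that $(q, f_2, f_3)$ has no common zero in $\bbP^2_{\bbC}$, and hence forms a regular sequence in $S$. For a generic $f_2 \in W$, B\'ezout's theorem gives $|V(q) \cap V(f_2)| = 2d$; for each of these $2d$ points $p$, base point freeness of $W$ ensures that $\{h \in W : h(p) = 0\}$ is a proper linear subspace of $W$, so a generic $f_3 \in W$ vanishes at none of them. Theorem \ref{thm:CB8} then applies to this regular sequence, making $J' := \langle q, f_2, f_3 \rangle$ an Artinian Gorenstein ideal with socle degree $2 + d + d - 3 = 2d - 1$.

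From the containment $J' \subseteq J$, the surjection $S/J' \twoheadrightarrow S/J$ yields $\dim (S/J)_i \leq \dim (S/J')_i$ for every $i$; at $i = 2d-1$, the one-dimensional socle of $S/J'$ forces $\dim (S/J)_{2d-1} \leq 1$. On the other hand, Gorenstein symmetry of $S/J$ (socle degree $2d$) combined with $J_1 = 0$ yields $\dim(S/J)_{2d-1} = \dim(S/J)_1 = 3$; the vanishing $J_1 = 0$ itself follows from the hypothesis, since a linear form $\ell \in J$ together with height $3$ would force $J$ to be a complete intersection $\langle \ell, p_1, p_2 \rangle$, and then every element of $J_d$ would vanish on the nonempty locus $V(\ell) \cap V(p_1)$ (or on $V(\ell)$ if $\deg p_1, \deg p_2 > d$), producing base points of $W$. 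The inequality $3 \leq 1$ is the desired contradiction.

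The main obstacle I expect is the very first step, namely extracting the regular sequence $(q, f_2, f_3)$ from $J$ using only the base point freeness of $W$; once the complete intersection $J'$ is in hand, the remainder of the proof is a one-line comparison of graded pieces.
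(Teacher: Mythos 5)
Your proof is correct and follows essentially the same route as the paper: extract the complete intersection $\langle q, f_2, f_3\rangle$ with $f_2, f_3$ chosen generically from the base point free series $W$, and invoke Theorem \ref{thm:CB8} to conclude it is Gorenstein with socle degree $2d-1$. The paper then finishes more directly by noting $\langle q,f_2,f_3\rangle_{2d} = S_{2d} \subseteq J_{2d}$, contradicting $\dim(S/J)_{2d}=1$; this comparison in degree $2d$ avoids your detour through $J_1=0$, whose sub-case $\deg p_1,\deg p_2 \le d$ you do not explicitly close (it is ruled out by computing the socle degree of $\langle \ell, p_1, p_2\rangle$ via Theorem \ref{thm:CB8}).
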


\begin{proof}
Suppose for the sake of a contradiction that $J$ has a generator $f$ of degree 2. Since $W \subseteq J_d$, there are $g,h\in J_d$ such that $\langle f,g,h \rangle_d \subseteq W$ is a base point free linear series. Therefore, by Theorem \ref{thm:CB8}, it must be the case that $\langle f,g,h \rangle$ is Gorenstein with socle in degree $2+d+d -3 = 2d-1$ and therefore $S_{2d}= \langle f,g,h \rangle_{2d} \subseteq J_{2d}$ which contradicts the assumption of the socle degree of $J$. 
\end{proof}

\begin{proposition}\label{prop:LowDegreeGens}
Let $J$ be a Gorenstein ideal with socle degree $2d$. Then

\begin{enumerate}
\item If $J$ has at most two generators of degree at most $d$, then $J_d$ does not contain a basepoint free linear series.
\item If $J$ has exactly three generators $f_1,f_2,f_3$ of degree at most $d$, $\cV_{\bbC}(f_1,f_2,f_3) = \emptyset$, and $\sum_{i = 1}^3 \deg(f_i) > 2d +3$, then $\langle J_d \rangle_{2d}$ is not a hyperplane in $S_{2d}$.   
\end{enumerate}

\end{proposition}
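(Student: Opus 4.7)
For part (1), the plan is to exhibit a common zero of all of $J_d$ in $\bbP^2_{\bbC}$; any linear series contained in $J_d$ then inherits this point as a base point. If $g_1,g_2$ are the generators of $J$ of degree at most $d$ (the cases of one or zero such generators are handled similarly and are easier), then only these two generators contribute to $J_d$, so $J_d \subseteq S_{d-\deg g_1}\,g_1 + S_{d-\deg g_2}\,g_2$. Any point in $\cV_{\bbC}(g_1,g_2)$ is then a base point of $J_d$, and this locus is nonempty because any two hypersurfaces in $\bbP^2$ meet by B\'ezout's theorem.

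For part (2), the plan is to prove the stronger statement that $\langle J_d\rangle_{2d}$ has codimension at least three in $S_{2d}$. Set $K = \langle f_1,f_2,f_3\rangle$ and $d_i = \deg f_i$. I would first observe that, since $f_1,f_2,f_3$ are the only generators of $J$ of degree at most $d$, one has $J_d = K_d$; using surjectivity of multiplication $S_d \otimes S_{d-d_i} \to S_{2d-d_i}$ (valid since $d_i \leq d$), this upgrades to $\langle J_d\rangle_{2d} = S_d \cdot K_d = K_{2d}$. Next, by Theorem \ref{thm:CB8} and the hypothesis $\cV_{\bbC}(f_1,f_2,f_3) = \emptyset$, $K$ is a Gorenstein complete intersection with socle degree $s = d_1 + d_2 + d_3 - 3$, and the hypothesis $\sum d_i > 2d+3$ gives $s > 2d$.

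The crux is a Hilbert-function computation. Setting $e = s - 2d \geq 1$, Gorenstein symmetry of $S/K$ yields $\dim(S/K)_{2d} = \dim(S/K)_e$, and the goal is to show this dimension is at least three. The key combinatorial observation is that $d_i > e$ for every $i$: otherwise, say $d_1 \leq e = d_1 + d_2 + d_3 - 2d - 3$, which rearranges to $d_2 + d_3 \geq 2d+3$, contradicting $d_2,d_3 \leq d$. Consequently $K_e = 0$, so $\dim(S/K)_e = \binom{e+2}{2} \geq 3$, and $\langle J_d\rangle_{2d}$ has codimension at least three in $S_{2d}$.

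The argument is largely bookkeeping once the correct framework is in place. The main step that demands care is the identification $\langle J_d\rangle_{2d} = K_{2d}$: one must verify that although $J$ may well have generators in degrees between $d+1$ and $2d$ contributing to $J_{2d}$, they cannot enlarge the subspace $\langle J_d\rangle_{2d}$, which depends only on $J_d$ and hence only on the three low-degree generators $f_1,f_2,f_3$.
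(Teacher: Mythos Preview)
Your argument is correct and follows the same route as the paper: for part~(1), exhibit a common zero of the at-most-two low-degree generators; for part~(2), pass to the complete intersection $K = \langle f_1,f_2,f_3\rangle$, invoke Theorem~\ref{thm:CB8} to get socle degree $s = \sum d_i - 3 > 2d$, and conclude that $K_{2d}$ (hence $\langle J_d\rangle_{2d} \subseteq K_{2d}$) has too large a codimension.

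The only real difference is in the level of detail for part~(2). The paper simply asserts that $K_{2d}$ has codimension at least~$2$, relying on the reader to supply the Hilbert-function reasoning. You make this explicit via the Gorenstein duality $\dim(S/K)_{2d} = \dim(S/K)_e$ together with the neat observation that $d_i > e$ for every $i$ (forced by $d_j + d_k \leq 2d$ for the other two indices), which yields $K_e = 0$ and hence codimension $\binom{e+2}{2} \geq 3$. This is a genuine sharpening of the paper's stated bound and fills in a step the paper leaves implicit. Your care in verifying $\langle J_d\rangle_{2d} = K_{2d}$ (via surjectivity of $S_d \otimes S_{d-d_i} \to S_{2d-d_i}$) is also more than the paper spells out, though for the proposition only the containment $\langle J_d\rangle_{2d} \subseteq K_{2d}$ is needed.
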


\begin{proof}
For statement $(1)$, note that if $J$ has at most two generators of degree at most $d$, say $f,g \in S$, then $J_d = \langle f, g \rangle_{d}$. This implies that $\cV(f,g)$ is the base locus of $J_d$ and this is at least a $0$-dimensional subset of $\P^2$. In particular, $J_d$ cannot contain a basepoint free linear series. 

For statement $(2)$, since $\cV_{\bbC}(f_1,f_2,f_3) = \emptyset$, the ideal $\langle f_1,f_2,f_3\rangle$ is Gorenstein with socle degree $\sum_{i = 1}^3\deg(f_i) -3 > 2d$. So $\langle f_1,f_2,f_3\rangle_{2d}\subseteq S_{2d}$ has codimension at least 2.\end{proof} 

If $J \subseteq S$ is a Gorenstein ideal with socle degree $2d$, we define the \emph{Hilbert function} $T_J$ to be the Hilbert function of the Artinian algebra $S/J$, i.e. the sequence of $\bbR$-vector space dimensions of the graded pieces of $S/J$:

\[T_J = (\dim (S/J)_0, \dim (S/J)_1, \ldots, \dim (S/J)_{2d-1},\dim (S/J)_{2d}).\]
with $\dim(S/J)_0 = 1$ and $\dim(S/J)_{2d} = 1$. This sequence is always symmetric, that is $\dim(S/J)_{i} = \dim(S/J)_{2d-i}$ for all $0\leq i\leq 2d$. Diesel \cite{Diesel1996Irreducibility} characterizes the possible Hilbert functions $T_J$. 

In order to describe Diesel's result, we need the terminology of \emph{self-complementary partitions}. A partition of a $2d \times (2d-2k+2)$ block is represented graphically by a box with $2d$ rows and $2d -2k + 2$ columns, where row $i$ has $a_i$ filled and $b_i$ unfilled blocks (so that $b_i = 2d-2k+2 - a_i$). A partition is self-complementary if $a_1\leq a_2\leq \ldots \leq a_{2d}$ and if $a_i = b_{2d - i + 1}$ for all $1\leq i \leq 2d$ (see Example \ref{ex:ExamplePartition} and Figure \ref{fig:ExamplePartition} below for an example).

\begin{proposition}[{\cite[Proposition 3.9]{Diesel1996Irreducibility}}]\label{prop:GorensteinPartitions}
    For fixed socle degree $n$ and minimal generator degree $k$, there is a one-to-one correspondence between Hilbert functions of Gorenstein algebras $S/J$ and self-complementary partitions of $2k$ by $n-2k+2$ blocks.
\end{proposition}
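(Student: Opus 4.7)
The plan is to invoke the Buchsbaum--Eisenbud structure theorem for height 3 Gorenstein ideals. This classical result says that any Gorenstein ideal $J \subseteq S$ of codimension 3 is minimally generated by the $2t \times 2t$ Pfaffians of some $(2t+1) \times (2t+1)$ skew-symmetric matrix of forms, and its minimal graded free resolution has the self-dual shape
\[
0 \to S(-n-3) \to \bigoplus_{i=1}^{2t+1} S(-e_i) \to \bigoplus_{i=1}^{2t+1} S(-d_i) \to S \to S/J \to 0,
\]
with generator degrees $d_1 \leq \ldots \leq d_{2t+1}$ and syzygy degrees satisfying the duality $d_i + e_{2t+2-i} = n+3$. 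The Hilbert function $T_J$ is then read off from the multiset $\{d_1,\ldots,d_{2t+1}\}$ via the Euler characteristic formula
\[
T_J(j) = \binom{j+2}{2} - \sum_i \binom{j-d_i+2}{2} + \sum_i \binom{j-e_i+2}{2} - \binom{j-n-1}{2}.
\]
The first task is to verify that, conversely, $T_J$ determines the generator degrees, so that classifying Hilbert functions with minimal generator degree $k$ and socle degree $n$ reduces to classifying admissible sequences $(d_1,\ldots,d_{2t+1})$ with $d_1 = k$ and each $d_i \leq (n+2)/2$.

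Second, I would set up an explicit bijection between admissible degree sequences and self-complementary partitions in the prescribed box. The rule is: record in row $i$ of the partition a block of $a_i$ filled cells encoding a shifted version of the generator degree $d_i$, so that the number $b_i$ of unfilled cells in the same row encodes the paired syzygy degree. The monotonicity $d_1 \leq \ldots \leq d_{2t+1}$ becomes $a_1 \leq a_2 \leq \ldots$, and the Gorenstein self-duality $d_i + e_{2t+2-i} = n+3$ becomes exactly the self-complementarity relation $a_i = b_{2d-i+1}$. Reading the correspondence in reverse shows that a self-complementary partition prescribes a legal multiset of generator and syzygy degrees.

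The main technical obstacle is the \emph{realizability} direction: given any admissible degree sequence, one must produce an honest height 3 Gorenstein ideal whose minimal resolution has exactly those generator degrees, not merely a formal complex of the right shape. Following Diesel, the strategy is to parameterize $(2t+1) \times (2t+1)$ skew-symmetric matrices of forms whose entries have the degrees forced by the sequence, and then show that a generic such matrix has Pfaffians cutting out a subscheme of the expected codimension 3. This can be done either by producing one explicit monomial example in each admissible shape and invoking upper semicontinuity of codimension on the parameter space, or by a direct dimension count coupled with the Buchsbaum--Eisenbud acyclicity criterion. Injectivity of the correspondence is then immediate from the uniqueness of minimal graded free resolutions.
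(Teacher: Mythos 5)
The paper does not prove this proposition at all: it is quoted verbatim from Diesel (\cite[Proposition 3.9]{Diesel1996Irreducibility}) and used as a black box, so there is no internal proof to compare against. Your sketch does follow the broad shape of Diesel's actual argument (Buchsbaum--Eisenbud structure theorem, self-dual resolution with $d_i+e_{2t+2-i}=n+3$, a combinatorial dictionary into a $2k\times(n-2k+2)$ box, and realizability via generic skew-symmetric matrices of forms / Pfaffians), so the strategy is sound in outline.

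There is, however, one genuine gap: the claim that ``$T_J$ determines the generator degrees'' is false, and the paper itself tells you why. As explained in Remark \ref{rem:GorensteinBlocks} and Theorem \ref{thm:GorT_irreducible}, a single Hilbert function $T$ is compatible with a whole poset of minimal generator/relation degree sets $D$: whenever two diagonal degrees satisfy $r_i+r_j=0$ one may add or delete the pair $(p_i,q_i),(p_j,q_j)$ without changing $T$, and $Gor_T=\overline{Gor_{D_{\min}}}=\bigcup_{D'\supseteq D_{\min}}Gor_{D'}$. Concretely, in Example \ref{ex:ExamplePartition} the Hilbert function $(1,3,6,9,10,9,6,3,1)$ is realized both by ideals with three minimal generators (degrees $3,4,4$) and by more special ideals with larger minimal generating sets. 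So the composite ``$T\mapsto$ degree sequence of some $J$ with $T_J=T$ $\mapsto$ partition'' is not well defined as you set it up. The fix is to reverse the logic: the self-complementary partition is read off directly from $T$ (via its symmetrized difference function), it determines the \emph{saturated} degree sequence $q_i$, $p_i=n+3-q_i$ with diagonal degrees $r_{i+1}=b_i-a_i+1$, and the various genuine minimal resolutions are obtained by cancelling pairs with $r_i+r_j=0$. Injectivity of the correspondence then comes from the partition-to-$T$ direction, not from uniqueness of minimal resolutions. With that correction, and with the realizability step you already identify (producing, for each admissible saturated sequence, an actual height $3$ Gorenstein ideal, e.g.\ by exhibiting a monomial or generic Pfaffian example of the right codimension), the argument closes up and is essentially Diesel's.
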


\begin{remark}\label{rem:GorensteinBlocks}

Diesel explicitly relates the self-complementary partitions to a free resolution of $S/J$ of the form

\begin{equation}\label{eq:Ternary_Free_Res}0 \to S(-2d-3) \to \bigoplus_{i = 1}^{2k+1}S(-p_i) \to \bigoplus_{i = 1}^{2k+1}S(-q_i) \to S\to S/J \to 0.\end{equation}

\noindent The generator and relation degrees satisfy $q_i + p_i = 2d+3$, and we set $r_i = p_i - q_i$ and refer to the $r_i$ as \emph{diagonal degrees}. In this way, the Hilbert function is completely specified by the degrees of generators and relations $D = (P,Q)$ with $P = (p_1,p_2,\ldots, p_{2k+1})$ and $Q = (q_1, q_2, \ldots, q_{2k+1})$. If row $i$ of the partition has $a_i$ filled and $b_i$ unfilled blocks, then $r_{i +1} = b_i - a_i + 1$. Additionally, if $r_i + r_j = 0$, then we can delete the pair $(p_i,p_j)$ from $P$ and the pair $(q_i,q_j)$ from $Q$ without changing the Hilbert function. By iteratively removing pairs from $P$ and $Q$, we obtain a minimal set of generator and relation degrees $D_{\min} = (P_{\min},Q_{\min})$.
\end{remark}

\begin{example}\label{ex:ExamplePartition}
Consider the case with $d = 4$ and minimal generator degree $3$. The Hilbert functions of $S/J$ with socle degree $2d$ are in one-to-one correspondence with self-complementary partitions of $6 \times 4$ blocks. We construct the minimal generators of the ideal corresponding to the partition shown in Figure \ref{fig:ExamplePartition}.  

\begin{figure}
\begin{minipage}{0.48\textwidth}
\begin{tikzpicture}
\draw[very thick] (0,0) rectangle (4,6);
\fill[gray] (0,0) rectangle (1,6);
\fill[gray] (1,0) rectangle (2,4);
\fill[gray] (2,0) rectangle (3,2);
\foreach \x in {1,...,3}
    \draw[very thick] (\x,0)--(\x,6);
\foreach \y in {1,...,5}
    \draw[very thick] (0,\y)--(4,\y);
\end{tikzpicture}
\end{minipage}
 \begin{minipage}{0.48\textwidth}
    \begin{tabular}{c || c c c c c c c c}
        $i$ & 1 & 2 & 3 & 4 & 5 & 6 & 7 \\
         \hline
        $r_i$ & 5 & 3 & 3 & 1 & 1 & -1 & -1\\
        $q_i$ &3 & 4 & 4 & 5 & 5 & 6 & 6 \\ 
        $p_i$ & 8 & 7 & 7 & 6 & 6 & 5 & 5\\
    \end{tabular}
 \end{minipage}

\caption{Partition of a $6\times 4$ block corresponding to the Hilbert function $T = (1,3,6,9,10,9,6,3,1)$ of a Gorenstein ideal with socle degree $8$ and minimal generator degree 3. The table displays the degrees of generators ($q_i$), degrees of relations ($p_i$), diagonal degrees $(r_i)$. Since $r_4 + r_6 = r_5+r_7 = 0$, there is an ideal $J$ generated by a cubic and two quartics which gives rise to this Hilbert function. }\label{fig:ExamplePartition}
\end{figure}

This partition corresponds to the Hilbert function $T = (1,3,6,9,10,9,6,3,1)$. Since $r_4 + r_7 = 0$ and $r_5 + r_6 = 0$, we see that an ideal $J$ giving rise to the Hilbert function $T$ can be minimally generated by one cubic and two quartics. 
\end{example}

\begin{theorem}[{\cite[Theorem 2.7 and Theorem 3.8]{Diesel1996Irreducibility}}]\label{thm:GorT_irreducible}
    Let $T$ be the Hilbert function of a Gorenstein ideal $J\subseteq S$ and $Gor_T$ the scheme of all Gorenstein ideals with Hilbert function $T$. Let $D$ be a set of generator and relation degrees giving rise to the Hilbert function $T$ and $Gor_D$ the family of Gorenstein ideals with generator and relation degrees given by $D$. Then, $\overline{Gor_D}$ is irreducible and equal to the union $\bigcup_{D' \supseteq D}Gor_{D'}$, where $D'$ ranges over sets of generator and relation degrees giving rise to $T$ and containing $D$. In particular, $Gor_T$ is irreducible and $Gor_T = \overline{Gor_{D_{\min}}}$ where $D_{\min}$ is the minimal set of generator and relation degrees giving rise to the Hilbert function $T$
        
\end{theorem}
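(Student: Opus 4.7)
The plan is to invoke the Buchsbaum--Eisenbud structure theorem for height~3 Gorenstein ideals, which presents any such $J$ fitting into a resolution of the shape \eqref{eq:Ternary_Free_Res} as the ideal of submaximal Pfaffians of a $(2k+1)\times(2k+1)$ skew-symmetric matrix $M=(m_{ij})$ whose entry $m_{ij}$ is a form of degree $(r_i+r_j)/2$, with $m_{ij}=0$ whenever this quantity is negative. Fixing $D=(P,Q)$ with diagonal degrees $r_1,\ldots,r_{2k+1}$, this identifies $\mathrm{Gor}_D$ with the image under the Pfaffian map of a Zariski-open subset $U_D$ of the affine space $V_D$ of all such skew-symmetric matrices, the fibers being orbits of the natural $\mathrm{GL}$-action rescaling the resolution modules. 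Since $V_D$ is an affine space and the height-3 condition cutting out $U_D$ is open and nonempty by assumption, $U_D$ is irreducible, and $\mathrm{Gor}_D$ and $\overline{\mathrm{Gor}_D}$ are irreducible as images of irreducible varieties under an algebraic map.

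For the equality $\overline{\mathrm{Gor}_D}=\bigcup_{D'\supseteq D}\mathrm{Gor}_{D'}$, I would argue in two directions. For the containment $\supseteq$: any $D'\supseteq D$ yielding $T$ is obtained from $D$ by adjoining pairs $(p_i,q_i),(p_{i'},q_{i'})$ with complementary diagonal degrees $r_i+r_{i'}=0$, i.e., cancellable pairs in the resolution. Given $J'\in\mathrm{Gor}_{D'}$ defined by $M'\in V_{D'}$, embed $M'$ as a principal block of a larger skew-symmetric matrix $M_t\in V_D$ and insert the parameter $t$ in the two degree-zero slots that link the extra rows and columns. For $t\ne 0$, the nonzero scalar entries allow row and column operations that cancel the two extra rows and columns without changing the Pfaffian ideal, so the ideal defined by $M_t$ lies in $\mathrm{Gor}_D$, while the specialization at $t=0$ recovers $J'$.

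For the containment $\subseteq$: any point of $\overline{\mathrm{Gor}_D}\subseteq \mathrm{Gor}_T$ is a Gorenstein ideal $J'$ with Hilbert function $T$, hence corresponds to some set of generator and relation degrees $D'$ producing $T$. Since graded Betti numbers are upper semicontinuous in flat families, componentwise $D'\supseteq D$, and so $J'\in\mathrm{Gor}_{D'}$ with $D'\supseteq D$. The last assertion $\mathrm{Gor}_T=\overline{\mathrm{Gor}_{D_{\min}}}$ then follows because every $D'$ giving rise to $T$ contains $D_{\min}$, so $\mathrm{Gor}_T=\bigcup_{D'}\mathrm{Gor}_{D'}=\overline{\mathrm{Gor}_{D_{\min}}}$, which is irreducible by the first step.

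I expect the main obstacle to be the $\supseteq$ direction: one must ensure that the deformation $M_t$ for $t\ne 0$ lies in $\mathrm{Gor}_D$ rather than in a further degeneration $\mathrm{Gor}_{D''}$ with $D''\supsetneq D$. This requires a sufficiently generic choice of the auxiliary entries of $M_t$ within the degree pattern allowed by $V_D$, together with a verification that no unexpected Pfaffian cancellations occur; unpacking the combinatorics of the correspondence with self-complementary partitions in Proposition~\ref{prop:GorensteinPartitions} is precisely what makes this verification work.
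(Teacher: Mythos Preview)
The paper does not prove this theorem: it is stated with a citation to \cite[Theorem~2.7 and Theorem~3.8]{Diesel1996Irreducibility} and no proof is given. There is therefore nothing in the paper to compare your argument against.

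That said, your sketch tracks Diesel's original argument closely: parametrize $\mathrm{Gor}_D$ via the Buchsbaum--Eisenbud Pfaffian presentation, deduce irreducibility from the irreducibility of the affine space $V_D$ of skew matrices with the prescribed degree pattern, and handle the closure statement by semicontinuity of Betti numbers for $\subseteq$ and an explicit one-parameter degeneration for $\supseteq$. Two points would need tightening if you wanted a self-contained proof. First, in the $\subseteq$ direction you assert $\overline{\mathrm{Gor}_D}\subseteq \mathrm{Gor}_T$; this requires knowing that a flat limit of height-3 Gorenstein ideals with Hilbert function $T$ remains height-3 Gorenstein with the same Hilbert function, which is not automatic and is part of what Diesel establishes. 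Second, as you yourself flag, the $\supseteq$ direction needs the generic matrix $M_t$ (for $t\neq 0$) to land in $\mathrm{Gor}_D$ and not in a deeper stratum; Diesel handles this by an explicit dimension count and a careful analysis of which Pfaffians survive, which is where the combinatorics of the self-complementary partitions enters. Your outline is correct in spirit but these two steps are where the actual work in \cite{Diesel1996Irreducibility} lies.
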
 

We apply Theorem \ref{thm:GorT_irreducible} to show that generic points on the boundary of the image of $\Phi_d$ give rise to Gorenstein ideals with minimal sets of generator and relation degrees. 
We still use the notation 
\[ M := \partial \, \overline{\mathrm{int}(\im(\Phi_d))} \bigcap \mathrm{int}(\Sigma_{2d})\subseteq S_{2d} . \] 
Using Proposition \ref{prop:IntroGorenstein} we associate to a generic element $p = \sum_{i = 0}^{d}h_i^2 \in M$ the Gorenstein ideal $I = I(L_{\mathbf{h}})$, where $L_\mathbf{h} \in S_{2d}^*$ annihilates $\langle h_0,h_1,\ldots, h_{d}\rangle_{2d}$.   

\begin{proposition}\label{prop:PrunedSuffice}
    If $p \in M$ is generic, then there exist $h_0,h_1,\ldots, h_{d} \in S_{d}$ such that $p = \sum_{i = 0}^{d}h_i^2$ and $\langle h_0,h_1,\ldots, h_{d}\rangle$ is contained in a Gorenstein ideal $J$ for which the set $D$ of generator and relation degrees is minimal. 
\end{proposition}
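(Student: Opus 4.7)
The plan is to upgrade the Gorenstein ideal from Proposition \ref{prop:IntroGorenstein} via an incidence variety argument combined with the irreducibility of $Gor_T$ established in Theorem \ref{thm:GorT_irreducible}. Since by Proposition \ref{prop:GorensteinPartitions} there are only finitely many Hilbert functions $T$ of Artinian Gorenstein quotients of $S$ with socle degree $2d$, after restricting to a dense subset of $M$ I may fix a single Hilbert function $T$ such that the Gorenstein ideal supplied by Proposition \ref{prop:IntroGorenstein} has Hilbert function $T$ for generic $p \in M$.

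Next, I form the incidence variety
\[
\mathcal{I} \;=\; \bigl\{(\mathbf{h},J) \in S_d^{d+1} \times Gor_T : h_i \in J_d \text{ for all } i \text{ and } \langle h_0,\ldots,h_d\rangle_{2d} = J_{2d}\bigr\},
\]
together with the projection $\pi\colon \mathcal{I} \to Gor_T$ and the evaluation map $\Psi\colon \mathcal{I} \to S_{2d}$ given by $(\mathbf{h},J) \mapsto \sum_i h_i^2$. A non-empty fiber of $\pi$ over $J$ is a Zariski-open subset of the irreducible affine space $J_d^{d+1}$ and hence irreducible. Combined with the irreducibility of $Gor_T$ from Theorem \ref{thm:GorT_irreducible}, this shows that $\mathcal{I}$ is irreducible, provided that the generic fiber of $\pi$ is non-empty (addressed below).

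By Theorem \ref{thm:GorT_irreducible}, $Gor_{D_{\min}}$ is a Zariski-open dense subset of $Gor_T$, so $\mathcal{I}_{D_{\min}} := \pi^{-1}(Gor_{D_{\min}})$ is a dense Zariski-open subset of $\mathcal{I}$. By the choice of $T$ and Proposition \ref{prop:IntroGorenstein}, the image $\Psi(\mathcal{I})$ contains a dense subset of $M$. Since $\mathcal{I}_{D_{\min}}$ is dense in the irreducible $\mathcal{I}$, Chevalley's theorem applied to $\Psi$ shows that the constructible set $\Psi(\mathcal{I}_{D_{\min}})$ is dense in $\Psi(\mathcal{I})$ and contains a dense subset of $M$. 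Any $p$ in this subset yields the desired decomposition $p = \sum h_i^2$ with $\langle h_0, \ldots, h_d \rangle$ contained in some $J \in Gor_{D_{\min}}$.

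The main obstacle is verifying that the generic fiber of $\pi$ is non-empty, that is, that a generic $J \in Gor_T$ admits a tuple $\mathbf{h} \in J_d^{d+1}$ with $\langle \mathbf{h} \rangle_{2d} = J_{2d}$. This amounts to the Zariski-open condition that the multiplication map $J_d \otimes S_d \to J_{2d}$ is surjective; non-emptiness of this open set follows because any $J$ arising from Proposition \ref{prop:IntroGorenstein} already satisfies it by construction, since in that setting $J_{2d} = \langle h_0, \ldots, h_d \rangle_{2d}$ with the $h_i \in J_d$. A secondary technical point is ensuring that the resulting "generic $p$" agrees with the natural semi-algebraic notion on the real locus $M$, which is handled by noting that the Euclidean interior of $M$ meets every non-empty Zariski-open subset of its complexification.
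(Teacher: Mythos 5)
Your overall strategy---feeding the output of Proposition~\ref{prop:IntroGorenstein} into Diesel's irreducibility theorem (Theorem~\ref{thm:GorT_irreducible}) and the density of $Gor_{D_{\min}}$ in $Gor_T$---is the same as the paper's; the incidence variety $\mathcal{I}$ is just a different packaging of the paper's use of the Gauss map $p\mapsto L_{\mathbf h}$ on the components of the Zariski closure of $M$. But there is a genuine gap at the decisive step. From irreducibility of $\mathcal{I}$ and density of $\mathcal{I}_{D_{\min}}$ you correctly conclude that $\Psi(\mathcal{I}_{D_{\min}})$ is Zariski dense in $\Psi(\mathcal{I})$, i.e.\ it contains a dense Zariski-open subset $V$ of $\overline{\Psi(\mathcal{I})}$. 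The inference ``\emph{and} contains a dense subset of $M$'' does not follow: a constructible set that is Zariski dense in a variety containing $M$ can be disjoint from $M$ (the complement of a line in the plane is Zariski dense in the plane and misses the line). Since $M$ has codimension one in $S_{2d}$, this is exactly the danger here: if $\overline{\Psi(\mathcal{I})}$ were strictly larger than the relevant component $Z$ of the Zariski closure of $M$, the bad locus $\overline{\Psi(\mathcal{I})}\setminus V$, though proper and closed in $\overline{\Psi(\mathcal{I})}$, could contain all of $M$. Your closing remark that the Euclidean interior of $M$ meets every non-empty Zariski-open subset of \emph{its} complexification concerns open subsets of $\overline{M}^{\mathrm{Zar}}$, whereas $V$ is open in $\overline{\Psi(\mathcal{I})}$; the argument silently assumes these two varieties coincide, and that is precisely what must be proved.

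The missing ingredient is the one the paper supplies up front: $M$ has pure codimension one, so each component $Z$ of its Zariski closure is an irreducible hypersurface; and every point of $\Psi(\mathcal{I})$ is a critical value of $\Phi_d$, because the defining condition $\langle h_0,\ldots,h_d\rangle_{2d}=J_{2d}\subsetneq S_{2d}$ says exactly that ${\rm d}_{\mathbf h}\Phi_d$ is not surjective. Hence $\overline{\Psi(\mathcal{I})}$ lies in the branch locus of $\Phi_d$ and has dimension at most $\dim S_{2d}-1$; being irreducible and containing $Z$, it equals $Z$, after which $V\cap M$ really is dense in the part of $M$ lying on $Z$ and your argument closes. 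Two smaller imprecisions: you cannot fix ``a single Hilbert function $T$ for generic $p\in M$''---different components of $\overline{M}^{\mathrm{Zar}}$ may carry different Hilbert functions, so the construction must be run one component at a time, as in the paper (this costs nothing, since the generic $p$ in the statement lies on a single component); and non-emptiness of the generic fiber of $\pi$ is not the surjectivity of $J_d\otimes S_d\to J_{2d}$ but the stronger open condition that some $(d+1)$-tuple in $J_d$ already generates $J_{2d}$ in degree $2d$---though, as you note, this open locus is non-empty because the ideal produced by Proposition~\ref{prop:IntroGorenstein} lies in it.
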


\begin{proof}
    Since $M$ has pure codimension $1$ (see~\cite[Lemma 4.5.2]{BCR1998RealAlgebraicGeometry}), its Zariski closure is a hypersurface. This hypersurface is contained in the branch divisor of $\Phi_d$ so that for a generic $p\in M$ with $p = \Phi_d(h_0,\ldots,h_d)$ the ideal $\langle h_0,h_1,\ldots,h_d \rangle$ is contained in a Gorenstein ideal with socle in degree $2d$ because its graded part of degree $2d$ is the tangent hyperplane $T_p M$ to $M$ at $p$. So for any irreducible component of the Zariski closure of $M$ there is a Hilbert function $T$ of a Gorenstein ideal with socle in degree $2d$ with the property that $T$ is the Hilbert function of $I(L_\mathbf{h})$ for the linear function $L_\mathbf{h}$ defining $T_p M$ for a generic point $p\in M$ on that irreducible component (and hence any generic point on that component).

    Now pick a generic point $p\in M$. Then $p$ is contained in only one irreducible component of the Zariski closure of $M$ and the Gorenstein ideal $I(L_\mathbf{h})$ with $T_p M = \cV(L_\mathbf{h})$ is in $Gor_{D_{min}}$ for the minimal set of generator and relation degrees for the Hilbert function $T$ corresponding to the irreducible component containing $p$ because $Gor_{D_{min}}$ is an open and dense subset of $Gor_T$ by \Cref{thm:GorT_irreducible}. Since $\langle h_0',h_1',\ldots,h_d'\rangle_{2d} \subset T_p M$ for any tuple $\mathbf{h'} = (h_0',h_1',\ldots,h_d')$ with $\Phi_d(\mathbf{h'}) = p$, the claim that $\langle h_0',h_1',\ldots,h_d'\rangle \subset J$ for a Gorenstein ideal $I(L_\mathbf{h}) = J\in Gor_{D_{min}}$ follows.
\end{proof}

We are only interested in Gorenstein ideals $J$ for which $\langle J_d \rangle_{2d} \subseteq S_{2d}$ is a hyperplane. Since the partition gives an explicit resolution of the ideal, we can eliminate many Gorenstein ideals from consideration. As an illustration, we show below that $J$ cannot have $d+1$ generators of degree $d$. This corresponds to the partition of a $2d \times 2$ block where $a_1 = a_2 = \dots = a_d = 0$ and $a_{d+1} = a_{d+2} = \dots = a_{2d} = 2$. 

\begin{lemma}
Suppose that $d \geq 2$. If $J$ is a Gorenstein ideal with $d+1$ generators of degree $d$ and $d$ generators of degree $d+2$, then $J_d$ does not generate a hyperplane in $S_{2d}$. 
\end{lemma}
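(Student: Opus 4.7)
The plan is to apply Diesel's structure theorem (Proposition~\ref{prop:GorensteinPartitions} and Remark~\ref{rem:GorensteinBlocks}) to put the Buchsbaum-Eisenbud resolution of $J$ in a particularly simple form, extract linear syzygies among the $d+1$ degree-$d$ generators, and use these to upper-bound $\dim\langle J_d\rangle_{2d}$ strictly below the hyperplane threshold $\dim S_{2d} - 1$.

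First we identify the partition. With minimal generator degree $k=d$, the partition lives in a $2d \times 2$ grid, and the given generator multiplicities force $a_1 = \cdots = a_d = 0$ and $a_{d+1} = \cdots = a_{2d} = 2$, the only self-complementary possibility with no rows of type $a_i=1$. The paired relation degrees are then $d+3$ (with multiplicity $d+1$) and $d+1$ (with multiplicity $d$), and the Buchsbaum-Eisenbud skew-symmetric matrix $M$ decomposes as
\[ M = \begin{pmatrix} A & B \\ -B^{\top} & 0 \end{pmatrix}, \]
with $A$ a $(d+1)\times(d+1)$ skew matrix of cubics, $B$ a $(d+1)\times d$ matrix of linear forms, and the $d \times d$ lower-right block forced to vanish since its entries would have degree $-1$. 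The last $d$ columns of $M$ then read as $d$ linear syzygies $\sum_i B_{ij} g_i = 0$ among the degree-$d$ generators $g_1,\ldots,g_{d+1}$ of $J$.

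Next we study the multiplication map $\mu \colon J_d \otimes S_d \to S_{2d}$, whose image is $\langle J_d\rangle_{2d}$. For each $f \in S_{d-1}$ and each column $j \in \{1,\ldots,d\}$ of $B$, the element $\sigma_{f,j} := \sum_i g_i \otimes (B_{ij} f)$ lies in $\ker\mu$. Once we establish the linear independence of these $d \cdot \dim S_{d-1}$ elements, the resulting bound
\[ \dim\langle J_d\rangle_{2d} \;\leq\; (d+1)\binom{d+2}{2} - d\binom{d+1}{2} \;=\; \tfrac{(d+1)(3d+2)}{2}, \]
compared with $\dim S_{2d} - 1 = (d+1)(2d+1) - 1$, leaves a codimension gap of $\tfrac{d(d+1)}{2} - 1 \geq 2$ for $d \geq 2$, so $\langle J_d\rangle_{2d}$ cannot be a hyperplane.

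The main obstacle is this linear independence. Using that $g_1, \ldots, g_{d+1}$ form a basis of $J_d$, it reduces to showing that the map $B \colon S_{d-1}^d \to S_d^{d+1}$ induced by $\vec h \mapsto (\sum_j B_{ij} h_j)_i$ is injective. We would deduce this from exactness of the Buchsbaum-Eisenbud resolution of $S/J$: lifting any $\vec h \in \ker B$ to the summand $S(-d-1)^d$ of the middle term produces an element $(0,\vec h)$ of $\ker M$ in degree $2d$, but $\ker M$ is the image of $S(-2d-3)$ and so vanishes below degree $2d+3$, forcing $\vec h = 0$. This is the only step where Gorenstein exactness is really used; everything else is partition bookkeeping and dimension counting.
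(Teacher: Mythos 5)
Your proof is correct and follows essentially the same route as the paper's: both use the $d$ linear syzygies in degree $d+1$ (which cannot involve the degree-$(d+2)$ generators) to bound $\dim\langle J_d\rangle_{2d} \leq (d+1)\binom{d+2}{2} - d\binom{d+1}{2} = \tfrac{(d+1)(3d+2)}{2} < \dim S_{2d}-1$. You simply make explicit, via the block structure of the Buchsbaum--Eisenbud matrix and the vanishing of $\ker M$ below degree $2d+3$, the steps the paper compresses into ``these relations cannot involve the generators of degree $d+2$'' and ``there are no relations among these relations.''
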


\begin{proof}
We compute dimensions. By Proposition \ref{prop:GorensteinPartitions}, there are $d+1$ generators of degree $d$ and $d$ relations of degree $d+1$. These relations cannot involve the generators of degree $d+2$. Moreover, by the free resolution \eqref{eq:Ternary_Free_Res} there are no relations among these relations. So, the $d+1$ generators of degree $d$ generate a space of dimension

\[ (d+1)\binom{d+2}{2} - d\binom{(d-1) + 2}{2} = \frac{1}{2}(d+1)(3d+2).\]
On the other hand, $\dim S_{2d} = (d+1)(2d+1)$. Since $\frac{1}{2}(d+1)(3d+2) < \dim S_{2d} - 1$ when $d \geq 2$, $J_d$ cannot generate a hyperplane in $S_{2d}$. 
\end{proof}

More generally, if $J$ is a Gorenstein ideals with generator degrees $Q = \{q_1,q_2,\ldots, q_s\}$ and relation degrees $P = \{p_1,p_2,\ldots, p_s\}$, set $m = \min \{q_i \; \vert \; q_i \geq (d+1)\}$. Then, 

\begin{equation}\label{eq:Initial_Bound}
\dim \langle J_d \rangle_{2d} \leq \sum_{i: q_i \leq d} \binom{2 + (2d- q_i)}{2} - \sum_{i : p_i < m}\binom{2 + (2d - p_i)}{2}.\end{equation}

On the other hand, enforcing that the $h_i$ are basepoint free yields a lower bound on the codimension (and therefore an upper bound on the dimension) of the degree $d$ part of a Gorenstein ideal containing them. 

\begin{theorem}[{\cite[Theorem 4.1]{blekherman2015positive}}]
Let $W$ be a subspace of $\bbC[x_1,x_2,x_3]_{d}$ with $d\geq 3$ such that $\cV_\bbC(W) = \emptyset$. If $J = \langle W \rangle$ has $\operatorname{codim} J_{2d} \geq 1$ then $\operatorname{codim} J_d \geq 3d - 2$. 
\end{theorem}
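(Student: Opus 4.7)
The plan is to work inside an Artinian Gorenstein quotient obtained from a complete intersection of three forms in $W$. Write $R = \bbC[x_1,x_2,x_3]$. Since $\cV_{\bbC}(W) = \emptyset$ and any two-dimensional subspace of $R_d$ has non-empty base locus on $\bbP^2_{\bbC}$ by B\'ezout, we have $\dim W \geq 3$, and a general-position argument yields three linearly independent forms $f_1,f_2,f_3 \in W$ forming a regular sequence in $R$. Setting $I_{CI} = \langle f_1,f_2,f_3 \rangle$ and $A = R/I_{CI}$ gives an Artinian Gorenstein algebra with socle degree $n = 3d-3$ and Koszul-computable Hilbert function.

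Next, reduce the problem modulo $I_{CI}$: let $\bar W = W/I_{CI,d} \subseteq A_d$. The hypothesis $J_{2d} \neq R_{2d}$ becomes $\bar W \cdot A_d \neq A_{2d}$ in $A$. Using the perfect Gorenstein pairings $A_{2d} \times A_{d-3} \to A_n \cong \bbC$ and $A_d \times A_{2d-3} \to A_n \cong \bbC$, this non-surjectivity produces a non-zero $\bar g \in A_{d-3}$ satisfying $\bar g \cdot \bar W = 0$ in $A_{2d-3}$. Hence $\bar W$ lies in the kernel of the multiplication map $\mu_{\bar g} \colon A_d \to A_{2d-3}$, and
\[ \dim W \leq \dim I_{CI,d} + \dim \ker \mu_{\bar g} = \dim R_d - \operatorname{rank}(\mu_{\bar g}). \]
Since $\dim A_d - (3d-2) = d(d-3)/2$, the desired inequality $\operatorname{codim} J_d \geq 3d-2$ is equivalent to the rank estimate $\operatorname{rank}(\mu_{\bar g}) \geq 3d-2$, or equivalently $\dim(0 : \bar g)_d \leq d(d-3)/2$ in $A$, for every non-zero $\bar g \in A_{d-3}$.

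This sharp annihilator estimate is the main obstacle. By Matlis duality in the Gorenstein algebra $A$,
\[ \dim(0 : \bar g)_d = \dim\bigl(A/(\bar g)\bigr)_{n-d} = \dim\bigl(R/\langle f_1,f_2,f_3,g\rangle\bigr)_{2d-3}, \]
so the task is to bound the Hilbert function of $R/\langle f_1,f_2,f_3,g\rangle$ in degree $2d-3$ by $d(d-3)/2$. My plan is to reduce to a complete-intersection calculation: for a generic choice of $f_1, f_2 \in W$, the pairwise intersection $\cV_{\bbC}(f_1,f_2) \subseteq \bbP^2_{\bbC}$ consists of $d^2$ points in general position, which impose independent conditions on $R_{d-3}$ (a $\binom{d-1}{2}$-dimensional space, with $\binom{d-1}{2} < d^2$ for $d \geq 3$), so no non-zero $g$ vanishes at all of them, meaning $(g,f_1,f_2)$ is a regular sequence. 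Then $R/\langle g,f_1,f_2 \rangle$ is Artinian Gorenstein with socle degree $3d-6$, and Gorenstein symmetry together with the Koszul Hilbert formula yields
\[ \dim\bigl(R/\langle g,f_1,f_2 \rangle\bigr)_{2d-3} = \dim\bigl(R/\langle g,f_1,f_2 \rangle\bigr)_{d-3} = \binom{d-1}{2}-1 = \tfrac{d(d-3)}{2}, \]
and quotienting further by $f_3$ can only decrease this Hilbert function. The residual difficulty lies in verifying that the generic configuration of $\cV_{\bbC}(f_i,f_j)$ can be achieved while the $f_i$ are constrained to lie in $W$, even for adversarial subspaces $W$; handling this requires a Bertini-type argument exploiting the base-point-freeness of $W$, or an upper semi-continuity degeneration of Hilbert functions to the generic case.
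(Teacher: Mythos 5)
This theorem is imported into the paper from \cite{blekherman2015positive} without proof, so there is no internal argument to compare against; I assess your proposal on its own terms. Your framework is sound and almost all of it checks out: picking a complete intersection $I_{CI}=\langle f_1,f_2,f_3\rangle$ inside $\langle W\rangle$, using the two perfect pairings on $A=R/I_{CI}$ (socle degree $3d-3$) to produce $0\neq \bar g\in A_{d-3}$ with $\bar g\cdot \bar W=0$, converting the claim into the estimate $\dim(0:\bar g)_d\leq d(d-3)/2$, rewriting this via duality as $\dim(R/\langle f_1,f_2,f_3,g\rangle)_{2d-3}\leq d(d-3)/2$, and the complete-intersection/Gorenstein-symmetry computation that gives exactly $d(d-3)/2$ once $(g,f_1,f_2)$ is a regular sequence. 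All the numerics are correct.

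The genuine gap is the sentence ``\ldots so no non-zero $g$ vanishes at all of them, meaning $(g,f_1,f_2)$ is a regular sequence.'' For $(g,f_1,f_2)$ to be a regular sequence in three variables you need $\cV_{\bbC}(g,f_1,f_2)=\emptyset$, i.e., $g$ vanishes at \emph{none} of the $d^2$ points of $\cV_{\bbC}(f_1,f_2)$ --- not merely ``not at all of them.'' A nonzero $g\in R_{d-3}$ cuts out a curve (for $d\geq 4$) which will in general meet the finite set $\cV_{\bbC}(f_1,f_2)$, so the implication as written is a non sequitur. There is also a circularity you do not address: $\bar g$ is produced by duality inside $A$ and therefore depends on the chosen $f_1,f_2,f_3$, so one cannot naively ``choose $f_1,f_2$ generically relative to $g$.'' Both problems can be repaired together: $\bar g$ depends only on the net $V=\spann\{f_1,f_2,f_3\}$ (a basis change in $V$ leaves $I_{CI}$, $A$, and $\bar g$ unchanged), so after $\bar g$ is fixed you may replace $f_1,f_2$ by a generic pencil in $V$. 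Since $V$ is base point free, $p\mapsto [f_1(p):f_2(p):f_3(p)]$ is a morphism $\bbP^2\to\bbP(V^*)$, and a point $p$ lies in the base locus of a pencil $U\subseteq V$ exactly when $U$ corresponds to the image of $p$ under this morphism; the image of the curve $\cV_{\bbC}(g)$ is a proper closed subset of the $\bbP^2$ of pencils, so a generic pencil $\spann\{f_1',f_2'\}\subseteq V$ has base locus disjoint from $\cV_{\bbC}(g)$, and $\langle f_1,f_2,f_3,g\rangle\supseteq\langle g,f_1',f_2'\rangle$ then yields the desired bound. With this repair the ``independent conditions on $R_{d-3}$'' discussion, and the residual difficulty you flag about general position of $\cV_{\bbC}(f_i,f_j)$, become unnecessary, and the proof goes through.
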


\section{Proof of Theorem \ref{thm:IntroMainThm}}\label{sec:MainThmProof}

In this section, we prove Theorem \ref{thm:IntroMainThm}. In the cases $2d = 10$ and $2d = 12$, our strategy is to bound the length of an element $p = \sum_{i = 0}^{d}h_i^2$ when the forms $h_i$ are contained in the degree $d$ part of a Gorenstein ideal $J$ with socle degree $2d$. To make this strategy precise, we define the Pythagoras number of a real projective subvariety $X \subseteq \bbP^n$ with homogeneous coordinate ring $R$ to be the minimum positive $r$ such that any $f \in R_2$ which is a sum of squares of elements of $R_1$ can be written as a sum of squares of at most $r$ elements of $R_1$. The Pythagoras number of $X$ is denoted $\py(X)$. 
Note that $\py(n,2d) = \py(\nu_d(\bbP^{n-1}))$, where $\nu_d$ is the $d^{th}$ Veronese map.  

The following results on Pythagoras numbers of varieties are established in \cite{Blekherman2022QuadraticPersistence}. Recall that a variety $X\subseteq \bbP^n$ is nondegenerate if it is not contained in a hyperplane and totally real if the real points $X(\bbR)$ form a Zariski dense subset of $X$.

\begin{theorem}[{\cite{Blekherman2022QuadraticPersistence}}]\label{thm:VarietyPyBounds}
Let $X \subseteq \bbP^n$ be a nondegenerate, irreducible, totally real variety. Then, 

\begin{enumerate}
\item $\py(X) = \dim(X) + 1$ if and only if $\deg(X) = 1 + \operatorname{codim}(X)$
\item If $X$ is arithmetically Cohen-Macaulay (aCM), then $\py(X) = 2 + \dim(X)$ if and only if $\deg(X) = 2 + \operatorname{codim}(X)$ or $X$ is a codimension-one subvariety of a variety of minimal degree.
\end{enumerate}
\end{theorem}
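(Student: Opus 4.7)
The plan is to establish each characterization by combining a classification-based upper bound on $\py(X)$ with a lower bound derived from the quadratic persistence invariant $qp(X)$ introduced in \cite{Blekherman2022QuadraticPersistence}. The underlying correspondence is that a sum-of-squares representation $f = \sum_{i=1}^r h_i^2$ with $h_i \in R_1$ is equivalent to a rank-$r$ positive semidefinite Gram matrix representative of $f$, so bounding $\py(X)$ amounts to controlling the minimum rank of such PSD representatives over all $f \in R_2$ that are sums of squares.

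First I would record the universal lower bound $\py(X) \geq \dim(X)+1$, obtained by choosing $f$ generic in the interior of the sums-of-squares cone in $R_2$ and invoking nondegeneracy of $X$ together with a general-position argument to force the $h_i$ to span a subspace of $R_1$ of dimension at least $\dim(X)+1$. For part (1), the sufficiency direction then reduces to the del Pezzo--Bertini classification of varieties of minimal degree as rational normal scrolls, the Veronese surface in $\bbP^5$, quadric hypersurfaces, or cones over these; in each case one verifies $\py(X) \leq \dim(X)+1$ directly using the concrete coordinate description. The necessity direction follows from the inequality $\py(X) \geq \dim(X)+1+qp(X)$ established in \cite{Blekherman2022QuadraticPersistence}, combined with the fact that a nondegenerate irreducible variety satisfies $qp(X) \geq 1$ unless it is of minimal degree.

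For part (2), the sufficiency direction requires two case analyses: varieties of almost minimal degree (classified by Fujita as del Pezzo varieties, certain projections, or hypersurface sections of minimal-degree varieties) and codimension-one subvarieties of varieties of minimal degree (handled by restricting Gram matrices from the ambient minimal-degree variety and bounding the rank loss by one). In both families one produces sum-of-squares representations with at most $\dim(X)+2$ summands by explicit manipulation. The main obstacle is the converse direction: ruling out aCM varieties with $\py(X)=\dim(X)+2$ that fall into neither family. The aCM hypothesis is essential here because it gives access to the minimal free resolution of the homogeneous coordinate ring; the argument would proceed by sharpening the quadratic-persistence inequality under the aCM assumption and then carefully matching the equality case $qp(X)=1$ against the geometric classification, so that sporadic aCM examples of intermediate degree excess are excluded. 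I expect the delicate step to be this matching: arguing that any aCM $X$ with $qp(X) = 1$ and $\deg(X) > \codim(X)+2$ must contain a hyperplane section whose coordinate ring forces a lower-rank Gram representative, contradicting $\py(X) = \dim(X)+2$.
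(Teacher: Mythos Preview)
This theorem is not proved in the present paper. It is quoted as a known result from \cite{Blekherman2022QuadraticPersistence} and stated without any accompanying argument, so there is no ``paper's own proof'' to compare your proposal against. Your sketch is an attempt to reconstruct the argument of the cited reference, not of anything appearing here.

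One technical point in your outline is nonetheless worth flagging: the inequality you invoke, $\py(X) \geq \dim(X)+1+qp(X)$, has the wrong form. In \cite{Blekherman2022QuadraticPersistence} the bound is $\py(X) \geq (n+1) - qp(X)$ for $X \subseteq \bbP^n$, together with $qp(X) \leq \codim(X)$ with equality exactly for varieties of minimal degree. Combining these gives $\py(X) \geq \dim(X) + 1 + (\codim(X) - qp(X))$, so it is the \emph{defect} $\codim(X)-qp(X)$, not $qp(X)$ itself, that pushes $\py(X)$ above $\dim(X)+1$ when $X$ is not of minimal degree. Your necessity argument for part~(1) and the ``matching step'' you flag in part~(2) would need to be rewritten around the correct direction of this inequality; as stated, your version would force $\py(X)$ to grow with $qp(X)$, which is the opposite of what actually happens.
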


We will also use the following:

\begin{lemma}[{\cite[Lemma 2.5]{Blekherman2022QuadraticPersistence}}]\label{lem:py_nonincrease}
Let $X \subseteq X' \subseteq \bbP^n$ be nondegenerate, irreducible, totally real varieties. Then $\py(X) \leq \py(X')$.
\end{lemma}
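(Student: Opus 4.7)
The plan is to transfer a sum-of-squares representation from $X$ up to $X'$, apply the bound defining $\py(X')$, and then restrict back to $X$. The inclusion $X \subseteq X'$ induces a surjection of real homogeneous coordinate rings $\pi\colon R' \twoheadrightarrow R$, which is graded and hence restricts to surjections $\pi_k\colon R'_k \twoheadrightarrow R_k$ in each degree $k$. Since both varieties are nondegenerate in $\bbP^n$, the vector spaces $R_1$ and $R'_1$ are each canonically identified with $\bbR[x_0,\ldots,x_n]_1$, so $\pi_1$ is in fact an isomorphism.

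Now suppose $f \in R_2$ is a sum of squares of elements of $R_1$, say $f = \sum_{i=1}^s h_i^2$ with $h_i \in R_1$. Write $\tilde h_i \in R'_1$ for the unique preimage of $h_i$ under $\pi_1$, and set $\tilde f = \sum_{i=1}^s \tilde h_i^2 \in R'_2$. Then $\tilde f$ is a sum of squares of elements of $R'_1$, so by the definition of $\py(X')$ we may write $\tilde f = \sum_{j=1}^{\py(X')} g_j^2$ with $g_j \in R'_1$. Applying $\pi$ to this identity yields $f = \sum_{j=1}^{\py(X')} \pi_1(g_j)^2$ in $R$, expressing $f$ as a sum of at most $\py(X')$ squares of elements of $R_1$. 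Since $f$ was arbitrary, $\py(X) \leq \py(X')$.

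The argument is essentially a formal transfer, and I do not anticipate any serious obstacle. The irreducibility and total reality hypotheses play no role here; the only inputs are the inclusion $X \subseteq X'$, which gives the graded surjection of coordinate rings, and nondegeneracy, which gives the identification in degree one. Even without nondegeneracy one could still pick an arbitrary lift of each $h_i$ to $R'_1$ and run the same argument, so the statement is quite robust.
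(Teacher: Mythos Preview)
Your argument is correct. The paper does not supply its own proof of this lemma; it simply quotes \cite[Lemma~2.5]{Blekherman2022QuadraticPersistence}, and your transfer-along-the-surjection-of-coordinate-rings argument is precisely the standard proof one finds there.
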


Our strategy for the cases $2d = 10$ and $2d = 12$ is summarized as follows. Let $p = \sum_{i = 0}^{d}h_i^2$, $J$ be a Gorenstein ideal satisfying the dimension conditions established in Section \ref{sec:GorensteinDimensions}, and $\{t_1,t_2,\ldots, t_r\}\subseteq J_d$ be a basis of $J_d$. We show that in the relevant cases, the subalgebra $\bbR[t_1,t_2,\ldots, t_r]$, with grading $\deg(t_i) = 1$ for each $i$, is the homogeneous coordinate ring of a variety $X \subseteq \bbP^{r-1}$ with $\py(X)\leq d$. Because $h_i \in J_d$ for all $i = 0,\ldots, d$, it then follows that $p$ has length at most $d$, the desired contradiction.  

We will use the language of toric varieties to construct varieties $X'$ which bound the Pythagoras numbers. For a lattice polytope $K \subseteq \bbR^n$ we set $X_K$ to be the Zariski closure of the image of the map 

\[ (\bbC^*)^{n} \ni (t_1,t_2,\ldots, t_n) \longmapsto \left(\prod_{i =1}^nt_i^{\alpha_i}\right)_{\alpha \in K \cap \bbZ^n} \subseteq \bbP^{|K \cap \bbZ^n| - 1}.\]
This is the image of the projective toric variety $X_{
\cN_K}$ associated to the normal fan $\cN_K$ of $K$ in $\bbP^{|K \cap \bbZ^n| - 1}$. The variety $X_K$ is isomorphic to $X_{\cN_K}$ if $K$ is very ample. A general reference for the theory of toric varieties is \cite{CoxLittleSchenckToricVarieties}.

For a positive integer $m$ and let $T_m = \conv((0,0),(0,m),(m,0)) \subseteq \bbR^2$ be the triangle associated to the $m^{th}$ Veronese surface $\nu_m(\bbP^2)$. Proposition \ref{prop:Toric_Containment} below shows that in the relevant cases the variety obtained from $\bbR[t_1,t_2,\ldots, t_r]$ can be embedded in a toric variety corresponding to a polytope which consists appropriately stacked copies of $T_m$. Before stating and proving Proposition \ref{prop:Toric_Containment}, we provide an example of the construction.

\begin{example}\label{ex:Toric_Construction} Consider a Gorenstein ideal $J$ with socle degree $2d = 10$ arising as the complete intersection of two quartics and a quintic. Let $f_1,f_2$ be quartic generators and $f_3$ the quintic generator of $J$ so that a basis for $J_5$ is given by 

\[\begin{array}{c c c c}
t_{1,1,0} = x_1f_1 & t_{1,0,1} = x_2f_1 & t_{1,0,0} = x_3f_1 \\ t_{2,1,0} = x_1f_2 & t_{2,0,1} = x_2f_2 & t_{2,0,0} = x_3f_2 & t_{3,0,0} = f_3.
\end{array}\]

Let $R = \bbR[J_5]$ be the algebra generated by the degree $5$ elements of $J$. There are nontrivial quadratic relations among the $t_{i,j,k}$ when $i = 1,2$. For example,

\[t_{1,1,0}t_{2,0,1} - t_{1,0,1}t_{2,1,0} = x_1f_1x_2f_2 - x_2f_1x_1f_2 = 0.\]

To formally describe the quadratic relations among these basis elements, we first treat each $f_i$ as an independent variable $y_i$. Disregarding $y_3$ for the moment, we get a variety defined by a monomial embedding, and in fact this is the toric variety
$X_K$ with the polytope $K$ given by \[K = \conv(\{(0,0,0), (1,0,0), (0,1,0),(0,0,1),(1,0,1),(0,1,1)\})\subseteq \bbR^3,\]
where the parametrization of $X_K$ is given in Cox coordinates \cite[Chapter 5.4]{CoxLittleSchenckToricVarieties}. This is a variety of minimal degree, and of dimension $3$. If we add $y_3$, then we get a cone $Y$ over $X_K$, which is still a variety of minimal degree, of dimension $4$. Specializing, $y_i=f_i$ can only add additional elements to the ideal of $Y$, and therefore $\Proj(R)$ is contained in a variety of minimal degree of dimension $4$. Thus $\py (\Proj(R))\leq 5$.

\end{example}

\begin{proposition}\label{prop:Toric_Containment}
Fix $d$. Let $J\subseteq S$ be a Gorenstein ideal with socle degree $2d$ and generator and relation degrees $q_1\leq q_2 \leq \ldots \leq q_s$ and $p_1 \geq p_2 \geq \ldots \geq p_s$, respectively. Let $\{f_1,f_2,\ldots, f_{s}\} \subseteq S$ be a set of generators of $J$ with $\deg(f_i) = q_i$. Let $s'$ be such that $q_i < d$ for $1\leq i \leq s'$  and $q_j \geq d$ for $j > s'$. 
Set 

\[\Omega = \{(i,j,k) \in \bbZ^3 \; \vert \; 1\leq i \leq s' \text{ and } (j,k) \in T_{d-q_i} \cap \bbZ^2\}\] 
and denote $|\Omega| = r$. For each $\omega = (i,j,k) \in \Omega$, set

\[t_\omega = x_1^jx_2^kx_3^{d - q_i - j - k}f_i \in S_d.\]

Then, the map $\bbR[y_{\omega} \; \vert \; \omega \in \Omega] \to \bbR[t_\omega \;  \vert \; \omega \in \Omega]$ which sends $y_\omega \mapsto t_\omega$ embeds $\Proj(\bbR[t_{\omega}\; \vert \; \omega \in \Omega]) \subseteq X_K \subseteq \bbP^{r-1}$, where $X_K$ is the toric variety associated to the polytope

\[K = \conv\left( \bigcup_{i = 1}^{s'} T_{d-q_i} \times e_{i-1} \right)\]
where we set $e_0 = 0 \in \bbR^{s'-1}$ and where $e_1,e_2,\ldots, e_{s'-1}$ are the standard basis vectors of $\bbR^{s'-1}$. 
\end{proposition}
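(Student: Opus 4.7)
The strategy is to realize the claimed embedding at the level of homogeneous coordinate rings: I plan to construct a surjective graded ring homomorphism from the homogeneous coordinate ring $\bbR[y_\omega]/I_K$ of $X_K$ onto $\bbR[t_\omega \; \vert \; \omega \in \Omega]$ (with the grading $\deg y_\omega = \deg t_\omega = 1$) sending $y_\omega \mapsto t_\omega$. Taking $\Proj$ then yields the closed embedding into $X_K \subset \bbP^{r-1}$.

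The first step is to identify $\Omega$ with the lattice points $K \cap \bbZ^{s'+1}$ of $K$, via the correspondence $\omega = (i,j,k) \longleftrightarrow m_\omega := (j,k,e_{i-1})$. This should follow from the observation that the last $s'-1$ coordinates of any point of $K$ lie in the standard simplex $\conv\{0,e_1,\ldots,e_{s'-1}\}$, whose only lattice points are the vertices. Thus the last $s'-1$ coordinates of any integer point of $K$ equal $e_{i-1}$ for a unique $i$, forcing the first two coordinates into $T_{d-q_i} \cap \bbZ^2$. Under this identification, $I_K$ is the toric ideal generated by the homogeneous binomials $y^\alpha - y^\beta$ with $\sum_\omega (\alpha_\omega-\beta_\omega) m_\omega = 0$ in $\bbZ^{s'+1}$ and $\sum_\omega \alpha_\omega = \sum_\omega \beta_\omega$.

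The core computation is to check that any such binomial vanishes after the substitution $y_\omega \mapsto t_\omega$. I would expand $\prod_\omega t_\omega^{\alpha_\omega}$ as a monomial in $x_1,x_2,x_3$ times a monomial in $f_1,\ldots,f_{s'}$ and then match exponents on both sides: the first two components of the lattice relation equate the exponents of $x_1$ and of $x_2$, the last $s'-1$ components equate the exponents of $f_2,\ldots,f_{s'}$, and the homogeneity condition combined with these identities forces the exponents of $f_1$ and of $x_3$ to agree as well. Hence $t^\alpha = t^\beta$ as elements of $S$, the surjection $\bbR[y_\omega] \twoheadrightarrow \bbR[t_\omega]$ factors through $\bbR[y_\omega]/I_K$, and taking $\Proj$ produces the desired embedding.

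The bulk of the work is bookkeeping of monomial exponents and I expect no conceptual obstacle; the mildly delicate part is the identification of lattice points, which is immediate once one notices the simplex structure in the last $s'-1$ coordinates of $K$.
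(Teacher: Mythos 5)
Your proof is correct and follows the same underlying idea as the paper's: verify that the binomial relations defining the toric ideal $I_K$ are killed by the substitution $y_\omega \mapsto t_\omega$, so that the surjection $\bbR[y_\omega]\twoheadrightarrow\bbR[t_\omega]$ factors through the coordinate ring of $X_K$ and induces a closed embedding on $\Proj$. The one substantive difference is that the paper only writes down the monomial parametrization of $X_K$ and checks the \emph{quadratic} binomials $v_{i_1,j_1,k_1}v_{i_2,j_2,k_2}-v_{i_3,j_3,k_3}v_{i_4,j_4,k_4}$, implicitly relying on the (unproved) assertion that $I_K$ is generated by quadrics; you instead check every binomial $y^\alpha-y^\beta$ in the toric ideal by matching exponents of $x_1,x_2,x_3$ and of $f_1,\ldots,f_{s'}$, which makes the argument self-contained and avoids that assertion entirely. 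Your identification of $\Omega$ with $K\cap\bbZ^{s'+1}$ via the simplex structure in the last $s'-1$ coordinates is also a detail the paper leaves implicit, and your exponent bookkeeping (the homogeneity relation forcing the $f_1$ and $x_3$ exponents to agree) is right. In short: same strategy, but your version is the more careful one.
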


\begin{proof}[Proof of Proposition \ref{prop:Toric_Containment}]
We show that the quadratic relations on $X_K$ vanish on the generators $t_\omega$ of $R = \bbR[t_\omega \; \vert \; \omega \in \Omega ]$. For $(u_1,u_2, z_1,z_2,\ldots, z_{s'}) \in \bbC^{2+s'}$ set $v_{1,j,k} = u_1^ju_2^k$ and $v_{i,j,k} = z_{i-1}u_1^ju_2^k$. Then, $X_K$ is the Zariski closure of the map

\[\bbC^{2 + s'}\ni(u_1,u_2, z_1,z_2,\ldots, z_{s'}) \longmapsto \left(v_{\omega} \; \vert \;  \omega\in \Omega\right). \]

The quadratics in the ideal defining $X_K$ are generated by the binomials $v_{i_1,j_1,k_1}v_{i_2,j_2,k_2} - v_{i_3,j_3,k_3}v_{i_4,j_4,k_4}$, where $(j_1+j_2, k_1+k_2) = (j_3 + j_4,k_3 + k_4)$ and $\{i_1,i_2\} = \{i_3,i_4\}$.  For any such set of indices, we have 

\begin{multline*}
t_{i_1,j_1,k_1}t_{i_2,j_2,k_2} - t_{i_3,j_3,k_3}t_{i_4,j_4,k_4} = \\ x_1^{j_1 + j_2}x_2^{k_1+k_2}x_3^{2d - (q_{i_1} + q_{i_2} + j_1+j_2 + k_1 + k_2)} f_{i_1}f_{i_2} - x_1^{j_3 + j_4}x_2^{k_3+k_4}x_3^{2d - (q_{i_3} + q_{i_4} + j_3+j_4 + k_3 + k_4)} f_{i_3}f_{i_4} = 0    
\end{multline*}

\end{proof}

We note that the containment in Proposition \ref{prop:Toric_Containment} cannot be strengthened to an equality in general, as shown by the following example.

\begin{example}
Let $J = \langle x_1^4,x_2^4,x_3^5 \rangle \subseteq S$, and set

\[\begin{array}{c c c}
t_1 = x_1^5, & t_2 = x_1^4x_2 & t_3 = x_1^4x_3\\
t_4 = x_1x_2^4, & t_5 = x_2^5 & t_6 = x_2^4x_3
\end{array}\]
There is the degree 4 relation $t_2^4 = t_1^3t_4$, which is not an equation in the defining ideal of $X_K$, constructed as in Example \ref{ex:Toric_Construction}. 
\end{example}

Our application of Proposition \ref{prop:Toric_Containment} will be to bound the length of elements $p = \sum_{i = 0}^{d}h_i^2$, where $h_i \in J_d$ for some Gorenstein ideal $J$. In the cases of interest, the toric variety $X_K$ constructed in Proposition \ref{prop:Toric_Containment} will have easily computable (co)dimension and degree. 

We are now prepared to prove Theorem \ref{thm:IntroMainThm}.

\begin{proof}[Proof of Theorem \ref{thm:IntroMainThm}]
Let $p = \sum_{i = 0}^{d}h_i^2$ be a general element of $\partial \, \overline{ \int \im \Phi_d} \cap \int(\Sigma_{2d})$. Let $q \in \int(\Sigma_{2d})$ be as in Proposition \ref{prop:IntroArgmin} and $L \in S_{2d}^*$ be the linear functional with $L(g) = \langle q-p,g\rangle$ for all $g \in S_{2d}$. By Proposition \ref{prop:IntroGorenstein} and Part (1) of Proposition \ref{prop:Nonneg_SOS_Syzygies}, there is a Gorenstein ideal $J \subseteq S$ such that $J = I(L)$. By Proposition \ref{prop:PrunedSuffice}, it suffices to consider the Gorenstein ideals $J$ with minimal generator and relation degrees. 

A straightforward python script\footnote{available at \url{https://github.com/Alex-Dunbar/pythagoras-numbers-ternary-forms.git}} enumerates all possible Hilbert functions $T_J$ for Gorenstein ideals $J \subseteq S$, computes 
the minimal generator and relation degrees of $J$, and removes ideals $J$ which do not satisfy the basepoint freeness and dimension conditions derived in Section \ref{sec:GorensteinDimensions}:

\begin{itemize}
\item $J$ has at least three generators of degree at most $d$,
\item if $J$ has exactly three generators $f_1,f_2,f_3$ of degree at most $d$ with no common (complex) zero, then $\sum_{i = 1}^3\deg(f_i) = 2d + 3$,  
\item $\mathrm{dim} J_d \geq d+1$,
\item the bound \eqref{eq:Initial_Bound} is larger than $\dim S_{2d} - 1$,
\item and $\mathrm{codim} J_d \geq 3d-2$.
\end{itemize}

\subsection{Degree \texorpdfstring{$2d = 10$}{2d = 10}}

There are three possible sets of generator and relation degrees for Gorenstein Ideals:

\begin{enumerate}
\item $Q_1 = \{3,5,5\}, P_1 = \{10,8,8\}$
\item $Q_2 = \{4,4,5\}, P_2 = \{9,9,8\}$
\item $Q_3 = \{4,5,5,5,7\}, P_3 = \{9,8,8,8,6\}$
\end{enumerate}

In each case, we show that the length of a sum of squares of elements of $\bbR[J_d]_d$ is at most $d$, contradicting part (4) of Proposition \ref{prop:Nonneg_SOS_Syzygies}.

\subsubsection{Case (1):} Let $f$ be the cubic generator and $q_1,q_2$ the quintic generators so that a basis for $J_5 \subseteq S_5$ is given by

\[\begin{array}{c c c c}
t_1 = x_1^2f & t_2 = x_1x_2f & t_3 = x_1x_3f & t_4 = x_2^2f\\
t_5 = x_2x_3f & t_6 = x_3^2f & t_7 = q_1 & t_8 = q_2
\end{array}\]
We apply Proposition \ref{prop:Toric_Containment} to the subalgebra $R = \bbR[t_1,t_2,\ldots, t_6] \subseteq S$ to obtain that $\Proj(R) \subseteq X_K$, where $X_K = \nu_2(\bbP^2)$ and so $X_K$ is a variety of minimal degree. Adding $t_7$ and $t_8$ we see that $\Proj \bbR[J_5]$ is contained in a double cone $Y$ over $X_K$ (i.e. $Y$ is a cone over a cone over $X_K$). Then $Y$ is a variety of minimal degree of dimension $4$. Thus $\py (\Proj(\bbR[J_5]))\leq 5$.

\subsubsection{Case (2):}

Let $f_1,f_2$ be quartic generators and $q$ the quintic generator so that a basis for $J_5$ is given by 

\[\begin{array}{c c c c}
t_1 = x_1f_1 & t_2 = x_2f_1 & t_3 = x_3f_1 \\ t_4 = x_1f_2 & t_5 = x_2f_2 & t_6 = x_3f_2 & t_7 = q.
\end{array}\]

We established in Example \ref{ex:Toric_Construction} that $\py (\Proj(\bbR[J_5]))\leq 5$. 

\subsubsection{Case (3):} Let $f$ be the quartic generator and $g_1,g_2,g_3$ quintic generators so that 

\[h_0 = \ell_0f,\, h_1 = \ell_1f,\, h_2 = \ell_2f,\, h_3 = g_1,\, h_4 = g_2,\,  h_5 = g_3,\]
where $\{\ell_0,\ell_1,\ell_2\}$ is a basis of $S_1$. Since there is a relation in degree 6, there are linear forms $m_0,m_1,\ldots, m_5$ such that

\begin{equation}\label{eq:2d=10_degree_6_syzygy}\ell_0m_0f + \ell_1 m_1f + \ell_2m_2f +  m_3g_1 + m_4g_2 + m_5g_3 = 0.\end{equation}
Since $\{\ell_0,\ell_1,\ell_2\}$ is a basis of $S_{1}$, it follows that there are $\alpha_{i,j} \in \bbR$ such that $m_{i} = \sum_{j = 0}^2 \alpha_{i,j}\ell_j$. Rearranging \eqref{eq:2d=10_degree_6_syzygy} and multiplying by $f$ then gives polynomials $a_0,a_1,a_2 \in \langle f,g_1,g_2,g_3 \rangle_5$ such that

\[f\ell_0 a_0 + f\ell_1a_1 + f\ell_2a_2 = h_0a_0 + h_1a_1 + h_2a_2 =0.\]

So, by Proposition \ref{lem:syzygy_in_ideal}, $\sum_{i = 0}^5 h_i^2$ can be written as a sum of at most five squares.

\subsection{Degree \texorpdfstring{$2d = 12$}{2d = 12}} The possible Gorenstein ideals have generator and relation degrees given by

\begin{enumerate}
\item {$Q_1 = \{3,6,6\}$, $P_1 = \{12, 9,9\}$}  
\item $Q_2 = \{4,4,6,6,10\}$, $P_2 = \{11,11,9,9,5\}$
\item $Q_3 = \{4,5,6\}$, $P_3 = \{11,10,9\}$
\item { $Q_4 = \{4,6,6,6,8\}$, $P_4 = \{11,9,9,7\}$}
\item  $Q_5 = \{5,5,5\}$, $P_5 = \{10,10,10\}$ 
\item $Q_6 = \{5,5,6,6,8\}$, $P_6 = \{10,10,9,9,7\}$
\item $Q_{7} = \{5,6,6,6,6,8,8\}$, $P_{7} = \{10,9,9,9,9,7,7\}$  
\end{enumerate}

\subsubsection{Case (1):} 

Let $f$ be the cubic generator and $q_1,q_2$ two sextic generators. Then, a basis for $J_6$ is given by 

\[\begin{array}{c c c c c c}
t_1 = x_1^3f& t_2 = x_1^2x_2f & t_3 = x_1^2x_3f & t_4 = x_1x_2x_3f & t_5 = x_1x_2^2f & t_6 =x_1x_3^2f \\ t_7 = x_2^3f & t_8 = x_2^2x_3f & t_9 = x_2x_3^2f & t_{10} = x_3^3f & t_{11} = q_1 & t_{12} = q_2.
\end{array}
\]
We apply Proposition \ref{prop:Toric_Containment} to the subalgebra $R = \bbR[t_1,t_2,\ldots, t_{10}]\subseteq S$ to obtain that $\Proj(R) \subseteq \nu_3(\bbP^2)$, which is an aCM variety of almost minimal degree. By adjoining $t_{11}$ and $t_{12}$ to the coordinate ring, we see that $\Proj(\bbR[J_6])$ is contained in the variety $Y$ obtained as a double cone over $\nu_3(\bbP^2)$. Therefore $Y$ is an aCM variety of almost minimal degree and dimension $4$. So, $\py(\Proj(\bbR[J_6])) \leq 6$. 

\subsubsection{Case (2):} Since there is a relation in degree $5$, there is a cubic $f$ which divides both quartic generators. In particular, by Case (1) the length of $p$ is at most $6$. 

\subsubsection{Case (3):} Let $f$ be the quartic generator, $g$ the quintic generator, and $q$ the sextic generator. A basis of $J_6$ is given by 

\[
\begin{array}{c c c c c}
t_1 = x_1^2f & t_2 = x_1x_2f & t_3 = x_1x_3f & t_4 = x_2^2f & t_5 = x_2x_3f\\
t_6 = x_3^2f & t_7 = x_1g & t_8 = x_2g & t_9 = x_3g & t_{10} = q.
\end{array}
\]
We apply Proposition \ref{prop:Toric_Containment} to the subalgebra $R = \bbR[t_0,t_1,\ldots, t_9]\subseteq S$ to obtain that  $\Proj(R)\subseteq X_K \subseteq \bbP^8$, where $X_K$ is  determined by the polytope

\[K = \conv\left(\{(0,0,0),(2,0,0),(0,2,0),(0,0,1),(1,0,1),(0,1,1)\}\right)\]
Using Macaulay2 \cite{M2,DepthSource}, we see that $X_K$ is arithmetically Cohen-Macaulay, and of almost minimal degree. By adjoining $t_{10}$ to the coordinate ring, we see that $\Proj(\bbR[J_6])$ is contained in the variety $Y$ obtained by taking a cone over $X_K$. Therefore $Y$ is an aCM variety of almost minimal degree of dimension $4$ and $\py(\Proj(\bbR[J_6]))  \leq 6$.

\subsubsection{Case (4):} Let $f$ be the quartic generator and $q_1,q_2,q_3$ the sextic generators. A basis for $J_6$ is given by 

\[
\begin{array}{c c c c c}
t_1 = x_1^2f & t_2 = x_1x_2f & t_3 = x_1x_3f & t_4 = x_2^2f\\
t_5 = x_2x_3f & t_6 = x_3^2f & t_7 = q_1 & t_8 = q_2 & t_9 = q_3
\end{array}
\]

By Proposition \ref{prop:Toric_Containment}, if $R = \bbR[t_1,t_2,\ldots, t_6]\subseteq S$, then $\Proj(R) \subseteq \nu_2(\bbP^2)$. By adjoining the variables $t_7,t_8,$ and $t_9$, we obtain that $\Proj(\bbR[J_6])$ is contained in the variety $Y$ obtained by taking a triple cone over $\nu_2(\bbP^2)$. Then, $Y$ is a variety of minimal degree and dimension $5$ so that $\py(\Proj(\bbR[J_6]))\leq 6$

\subsubsection{Case (5):} Let $q_1,q_2,q_3$ be the three quintic generators. A basis for $J_6$ is given by 

\[
\begin{array}{c c c}
t_1 = x_1q_1 & t_2 = x_2q_1 & t_3 = x_3q_1\\
t_4 = x_1q_2 & t_5 = x_2q_2 & t_6 = x_3q_2\\
t_7 = x_1q_3 & t_8 = x_2q_3 & t_9 = x_3q_3
\end{array}
\]

Applying Proposition \ref{prop:Toric_Containment} to the subalgebra $R = \bbR[t_1,t_2,\ldots t_9]\subseteq S$ gives that $\Proj(R) \subseteq X_K$, where $X_K$ is the toric variety associated to the polytope

\[K = \conv((0,0,0,0),(1,0,0,0),(0,1,0,0),(0,0,1,0),(1,0,1,0),(0,1,1,0),(0,0,0,1),(1,0,0,1),(0,1,0,1)).\]

Using Macaulay2, we verify that $X_K$ is aCM and of almost minimal degree so that $\py(X_K) = \dim(X_K) + 2 = 4+ 2 = 6$. Therefore $p$ has length at most $6$. 

\subsubsection{Case (6):} Let $f_1,f_2$ be the quintic generators and $q_1,q_2$ the sextic generators. A basis of $J_6$ is given by 

\[
\begin{array}{c c c c}
t_1 = x_1f_1 & t_2 = x_2f_1 & t_3 = x_3f_1 & t_4 = x_1f_2\\ t_5 = x_2f_2 & t_6 = x_3f_2 & t_7 = q_1 & t_8 = q_2.
\end{array}\]

Applying Proposition \ref{prop:Toric_Containment} to the subalgebra $R = \bbR[t_1,t_2,\ldots, t_6] \subseteq S$ yields that $\Proj(R) \subseteq X_K$, where $X_K$ is the toric variety corresponding to the polytope 

\[K = \conv(\{(0,0,0), (1,0,0), (0,1,0),(0,0,1),(1,0,1),(0,1,1)\})\subseteq \bbR^3.\]
By adjoining the variables $t_7$ and $t_8$ to the coordinate rings, we obtain that $\Proj(\bbR[J_6]) \subseteq Y$, where $Y$ is obtained as a double cone over $X_K$. Then $Y$ is a variety of minimal degree and dimension $5$ so that $\py(\Proj(\mathbb{R}[J_6])) \leq 6$.

\subsubsection{Case (7):} This case is similar to $2d = 10$, Case (3). Let $f$ be the quintic generator and $g_1,g_2,g_3,g_4$ sextic generators such that 

\[\begin{array}{c c c c }
h_0 = \ell_0 f & h_1 = \ell_1 f & h_2 = \ell_2 f\\
h_3 = g_1 & h_4 = g_2 & h_5 = g_3 & h_6 = g_4
\end{array}\]
for some basis $\{\ell_0,\ell_1,\ell_2\}$ of $S_1$. Since there is a relation in degree $7$, there are $\alpha_{i,j} \in \bbR$, for $0\leq i \leq 6$ and $0\leq j \leq 2$ such that 

\begin{equation}\label{eq:deg12}\sum_{i = 0}^2\left(\sum_{j = 0}^2\alpha_{i,j}\ell_j\ell_if \right) + \sum_{j= 0}^2\alpha_{3,j}\ell_jg_1 + \sum_{j= 0}^2\alpha_{4,j}\ell_jg_2 + \sum_{j= 0}^2\alpha_{5,j}\ell_jg_3 +\sum_{j= 0}^2\alpha_{6,j}\ell_jg_4 = 0.\end{equation}

Rearranging \eqref{eq:deg12} gives $a_0,a_1,a_2 \in \langle h_0,h_1,\ldots, h_6\rangle_6$ such that 

\[\ell_0a_0 + \ell_1a_1 + \ell_2a_2 = 0.\]

By multiplying by $f$, this implies that $h_0a_0 +h_1a_1 + h_2a_2 = 0$. Therefore, by Proposition \ref{lem:syzygy_in_ideal}, $p$ has length at most $6$. 

\subsection{Degree \texorpdfstring{$2d = 8$}{2d = 8}}

The only possible Gorenstein ideal $J$ has generator and relation degrees 

\[Q = \{3,4,4\}, \quad P = \{8,7,7\}\]
which is a complete intersection of a cubic and two quartics. Set $f$ to be the cubic generator of $J$. Then there is a basis $\{\ell_0,\ell_1,\ell_2\} \subseteq S_1$ of linear forms and $g,r \in S_{4}$ such that $J = \langle f,g,r\rangle$ and  

\[h_0 = f\ell_0 ,\enskip h_1 = f \ell_1 ,\enskip h_2 = f\ell_2,\enskip h_3 = g,\enskip \text{and } h_4 = r.\]
If we apply the approach of the previous subsections, the toric variety that we get is a double cone over $\nu_1(\P^2)$, which is $\P^4$. The Pythagoras number is $5$ in this case, and we do not get a contradiction.

Note that $\spann_{\bbR}\{h_0,h_1,\ldots, h_4\} = J_4$ and that these both coincide with $I(L)_4$, where $L \in S_8^*$ is the linear functional annihilating the hyperplane $\langle h_0,h_1,\ldots, h_{4}\rangle_8$. 

If there is a nontrivial quadratic $\sum_{0 \leq i,j \leq 4} c_{i,j}h_ih_j = 0$, then $p = \sum_{i = 0}^4h_i^2$ has length at most $4$ by Proposition \ref{lem:syzygy_in_ideal}. 

Otherwise, the only syzygies among $h_0,h_1,\ldots, h_4$ in the subalgebra $\bbR[h_0,h_1,\ldots, h_4] \subseteq S$ are those generated by those of the form $h_ih_j - h_jh_i$ for $0\leq i,j \leq 4$. We now claim that if $p =\sum_{i = 0}^{4} h_i^2$ is a second order stationary point of \eqref{eq:distance_to_image}, then for any $\hat{h}_0,\hat{h}_1,\ldots, \hat{h}_4$ with $\langle \hat{h}_0,\hat{h}_1,\ldots,\hat{h}_4\rangle = \langle h_0,h_1,\ldots, h_4\rangle$ and any $a_0,a_1,\ldots, a_4 \in S_4$ with $\sum_{i = 0}^{4}\hat{h}_ia_i = 0$, we have that $L(\sum_{i = 0}^{4}a_i^2)\leq 0$. Indeed, it follows from part (2) of Proposition \ref{prop:Nonneg_SOS_Syzygies} that the quadratic form on $S_4^5$ given by $(b_0,b_1,\ldots, b_4) \mapsto L(\sum_{i = 0}^4 b_i^2)$ is zero if and only if $b_0,b_1,\ldots, b_4 \in I(L)_4 =  \langle h_0,h_1,\ldots, h_4\rangle = \langle \hat{h}_0,\hat{h}_1,\ldots, \hat{h}_4\rangle$. In particular, the kernel of this quadratic form does not depend on the choice of basis $h_i$. Now, for fixed $a_0,a_1,\ldots, a_4 \in S_4$ with $\sum_{i = 0}^4h_ia_i = 0$, and for any nonzero constants $\gamma_0,\gamma_1,\ldots, \gamma_4$, we have that $\sum_{i = 0}^4 (\gamma_ih_i)(\frac{1}{\gamma_i}a_i) = 0$, and it follows that $L(a_i^2) \leq 0$ for each $i$ by taking $\gamma_j \to \infty$ for $j \not = i$. 

To conclude, it suffices to show that if $a \in S_4$ is any quartic, then there are $a_0,a_1,\ldots ,a_4 \in S_4$ such that $\sum_{i=0}^4a_ih_i = 0$ and $a = a_i$ for some $i$ , as this will imply that $L(a^2) \leq 0$ for all $a \in S_4$ and $-L \in \Sigma_8^*$, a contradiction with the construction of $L$. 

Let $a \in S_4$ have a real zero $v$, and let $m_0 \in S_1$ vanish on $v$. Then, because $a$ has a zero at $v$, the image of $a$ in $S/\langle m_0 \rangle$ factors as $m_1c_1$ mod $\langle m_0 \rangle$ for some $m_1 \in S_1$ and $c_1 \in S_3$. So, $a  = m_0c_0 + m_1c_1$, where $c_0 \in S_3$. Choose $m_2$ such that $\{m_0,m_1,m_2\}$ is a basis of $S_1$, and consider the change of basis of $I(L)_4$ given by $\ell_i \mapsto m_i$. Now

\[(-c_0m_2)(m_0h_0) + (-c_1m_2)(m_1h_1) + (a)(m_2h_2) = 0,\]
which implies that $L(a^2)\leq 0$. Now, every extreme ray of $\Sigma_8$ has a real zero, so it follows that $L(a^2) \leq 0$ for all $a \in S_4$ such that $a^2$ generates an extreme ray of $\Sigma_8$. It follows that $-L \in \Sigma_8^*$. \end{proof}

\printbibliography    

@article {Eisenbud1996CBTheorems,
    AUTHOR = {Eisenbud, David and Green, Mark and Harris, Joe},
     TITLE = {Cayley-{B}acharach theorems and conjectures},
   JOURNAL = {Bull. Amer. Math. Soc. (N.S.)},
  FJOURNAL = {American Mathematical Society. Bulletin. New Series},
    VOLUME = {33},
      YEAR = {1996},
    NUMBER = {3},
     PAGES = {295--324},
      ISSN = {0273-0979,1088-9485},
   MRCLASS = {14N05 (14M05)},
  MRNUMBER = {1376653},
MRREVIEWER = {Raquel\ Mallavibarrena},
       DOI = {10.1090/S0273-0979-96-00666-0},
       URL = {https://doi.org/10.1090/S0273-0979-96-00666-0},
}

@article {BMinPrep,
    AUTHOR = {Blekherman, Grigoriy and Sinn, Rainer and Velasco, Mauricio and Zhang, Shixuan},
     TITLE = {Spurious local minima in nonconvex sum-of-squares optimization},
   JOURNAL = {In Preparation},
}

@article {Diesel1996Irreducibility,
    AUTHOR = {Diesel, Susan J.},
     TITLE = {Irreducibility and dimension theorems for families of height
              {$3$} {G}orenstein algebras},
   JOURNAL = {Pacific J. Math.},
  FJOURNAL = {Pacific Journal of Mathematics},
    VOLUME = {172},
      YEAR = {1996},
    NUMBER = {2},
     PAGES = {365--397},
      ISSN = {0030-8730,1945-5844},
   MRCLASS = {13D40 (13H10)},
  MRNUMBER = {1386623},
       URL = {http://projecteuclid.org/euclid.pjm/1102366015},
}

@article {Blekherman2022QuadraticPersistence,
    AUTHOR = {Blekherman, Grigoriy and Sinn, Rainer and Smith, Gregory G.
              and Velasco, Mauricio},
     TITLE = {Sums of squares and quadratic persistence on real projective
              varieties},
   JOURNAL = {J. Eur. Math. Soc. (JEMS)},
  FJOURNAL = {Journal of the European Mathematical Society (JEMS)},
    VOLUME = {24},
      YEAR = {2022},
    NUMBER = {3},
     PAGES = {925--965},
      ISSN = {1435-9855,1435-9863},
   MRCLASS = {14P05 (11E25 52A99)},
  MRNUMBER = {4397034},
MRREVIEWER = {Juan\ C.\ Migliore},
       DOI = {10.4171/JEMS/1108},
       URL = {https://doi.org/10.4171/JEMS/1108},
}

@article {Blekherman2019LowRankSOSMinimalDegree,
    AUTHOR = {Blekherman, Grigoriy and Plaumann, Daniel and Sinn, Rainer and
              Vinzant, Cynthia},
     TITLE = {Low-rank sum-of-squares representations on varieties of
              minimal degree},
   JOURNAL = {Int. Math. Res. Not. IMRN},
  FJOURNAL = {International Mathematics Research Notices. IMRN},
      YEAR = {2019},
    NUMBER = {1},
     PAGES = {33--54},
      ISSN = {1073-7928,1687-0247},
   MRCLASS = {14P10},
  MRNUMBER = {4023754},
MRREVIEWER = {Jose\ Manuel\ Gamboa},
       DOI = {10.1093/imrn/rnx113},
       URL = {https://doi.org/10.1093/imrn/rnx113},
}

@article {Scheiderer2017SOSLengthRealForms,
    AUTHOR = {Scheiderer, Claus},
     TITLE = {Sum of squares length of real forms},
   JOURNAL = {Math. Z.},
  FJOURNAL = {Mathematische Zeitschrift},
    VOLUME = {286},
      YEAR = {2017},
    NUMBER = {1-2},
     PAGES = {559--570},
      ISSN = {0025-5874,1432-1823},
   MRCLASS = {11E25},
  MRNUMBER = {3648509},
MRREVIEWER = {Poo-Sung\ Park},
       DOI = {10.1007/s00209-016-1773-z},
       URL = {https://doi.org/10.1007/s00209-016-1773-z},
}

@article {Hilbert1888Ueber,
    AUTHOR = {Hilbert, David},
     TITLE = {Ueber die {D}arstellung definiter {F}ormen als {S}umme von
              {F}ormenquadraten},
   JOURNAL = {Math. Ann.},
  FJOURNAL = {Mathematische Annalen},
    VOLUME = {32},
      YEAR = {1888},
    NUMBER = {3},
     PAGES = {342--350},
      ISSN = {0025-5831,1432-1807},
   MRCLASS = {99-04},
  MRNUMBER = {1510517},
       DOI = {10.1007/BF01443605},
       URL = {https://doi.org/10.1007/BF01443605},
}

@book {BCR1998RealAlgebraicGeometry,
    AUTHOR = {Bochnak, Jacek and Coste, Michel and Roy, Marie-Fran\c{c}oise},
     TITLE = {Real algebraic geometry},
    SERIES = {Ergebnisse der Mathematik und ihrer Grenzgebiete (3) [Results
              in Mathematics and Related Areas (3)]},
    VOLUME = {36},
      NOTE = {Translated from the 1987 French original,
              Revised by the authors},
 PUBLISHER = {Springer-Verlag, Berlin},
      YEAR = {1998},
     PAGES = {x+430},
      ISBN = {3-540-64663-9},
   MRCLASS = {14Pxx (11E25 32C05 58A07)},
  MRNUMBER = {1659509},
MRREVIEWER = {A.\ Tognoli},
       DOI = {10.1007/978-3-662-03718-8},
       URL = {https://doi.org/10.1007/978-3-662-03718-8},
}

@article {Sard1942MeasureCriticalValues,
    AUTHOR = {Sard, Arthur},
     TITLE = {The measure of the critical values of differentiable maps},
   JOURNAL = {Bull. Amer. Math. Soc.},
  FJOURNAL = {Bulletin of the American Mathematical Society},
    VOLUME = {48},
      YEAR = {1942},
     PAGES = {883--890},
      ISSN = {0002-9904},
   MRCLASS = {27.2X},
  MRNUMBER = {7523},
MRREVIEWER = {H.\ Blumberg},
       DOI = {10.1090/S0002-9904-1942-07811-6},
       URL = {https://doi.org/10.1090/S0002-9904-1942-07811-6},
}

@article {Migliore2013GorensteinQuadrics,
    AUTHOR = {Migliore, Juan and Nagel, Uwe},
     TITLE = {Gorenstein algebras presented by quadrics},
   JOURNAL = {Collect. Math.},
  FJOURNAL = {Collectanea Mathematica},
    VOLUME = {64},
      YEAR = {2013},
    NUMBER = {2},
     PAGES = {211--233},
      ISSN = {0010-0757,2038-4815},
   MRCLASS = {13H10 (13D40 13E10)},
  MRNUMBER = {3041764},
MRREVIEWER = {Douglas\ A.\ Hanes},
       DOI = {10.1007/s13348-012-0076-x},
       URL = {https://doi.org/10.1007/s13348-012-0076-x},
}

@article{blekherman2015positive,
  title={Positive gorenstein ideals},
  author={Blekherman, Grigoriy},
  journal={Proceedings of the American Mathematical Society},
  volume={143},
  number={1},
  pages={69--86},
  year={2015}
}

@incollection {Chua2017GramSpectrahedra,
    AUTHOR = {Chua, Lynn and Plaumann, Daniel and Sinn, Rainer and Vinzant,
              Cynthia},
     TITLE = {Gram spectrahedra},
 BOOKTITLE = {Ordered algebraic structures and related topics},
    SERIES = {Contemp. Math.},
    VOLUME = {697},
     PAGES = {81--105},
 PUBLISHER = {Amer. Math. Soc., Providence, RI},
      YEAR = {2017},
      ISBN = {978-1-4704-2966-9; 978-1-4704-4222-4},
   MRCLASS = {14P10 (14P05)},
  MRNUMBER = {3716067},
MRREVIEWER = {Grigoriy\ Blekherman},
       DOI = {10.1090/conm/697/14047},
       URL = {https://doi.org/10.1090/conm/697/14047},
}

@article {Froberg2012Waring,
    AUTHOR = {Fr\"oberg, Ralf and Ottaviani, Giorgio and Shapiro, Boris},
     TITLE = {On the {W}aring problem for polynomial rings},
   JOURNAL = {Proc. Natl. Acad. Sci. USA},
  FJOURNAL = {Proceedings of the National Academy of Sciences of the United
              States of America},
    VOLUME = {109},
      YEAR = {2012},
    NUMBER = {15},
     PAGES = {5600--5602},
      ISSN = {0027-8424,1091-6490},
   MRCLASS = {11P05},
  MRNUMBER = {2935563},
MRREVIEWER = {Fethi\ Ben Sa\"id},
       DOI = {10.1073/pnas.1120984109},
       URL = {https://doi.org/10.1073/pnas.1120984109},
}

@Misc{M2,
          author = {Grayson, Daniel R. and Stillman, Michael E.},
          title = {Macaulay2, a software system for research in algebraic geometry},
          howpublished = {Available at \url{http://www2.macaulay2.com}}
        }

@misc{DepthSource,
        title = {{Depth: aids in computations related to depth. Version~1.0}},
        author = {Bart Snapp and David Eisenbud and Branden Stone},
        howpublished = {A \emph{Macaulay2} package available at \url{https://github.com/Macaulay2/M2/tree/master/M2/Macaulay2/packages}}
      }

@article {Sinn15AlgebraicBoundaries,
    AUTHOR = {Sinn, Rainer},
     TITLE = {Algebraic boundaries of convex semi-algebraic sets},
   JOURNAL = {Res. Math. Sci.},
  FJOURNAL = {Research in the Mathematical Sciences},
    VOLUME = {2},
      YEAR = {2015},
     PAGES = {Art. 3, 18},
      ISSN = {2522-0144,2197-9847},
   MRCLASS = {52A20 (14N05 14P10)},
  MRNUMBER = {3324401},
MRREVIEWER = {Igor\ Klep},
       DOI = {10.1186/s40687-015-0022-0},
       URL = {https://doi.org/10.1186/s40687-015-0022-0},
}

@Book{zbMATH06125965,
 Editor = {Blekherman, Grigoriy and Parrilo, Pablo A. and Thomas, Rekha R.},
 Title = {Semidefinite optimization and convex algebraic geometry},
 FSeries = {MOS-SIAM Series on Optimization},
 Series = {MOS-SIAM Ser. Optim.},
 Volume = {13},
 ISBN = {978-1-611972-28-3; 978-1-61197-229-0},
 Year = {2013},
 Publisher = {Philadelphia, PA: Society for Industrial {and} Applied Mathematics (SIAM)},
 Language = {English},
 DOI = {10.1137/1.9781611972290},
 Keywords = {90-06,90C22,52B55,14P05,00B15},
 zbMATH = {6125965},
 Zbl = {1260.90006}
}

@book {CoxLittleSchenckToricVarieties,
    AUTHOR = {Cox, David A. and Little, John B. and Schenck, Henry K.},
     TITLE = {Toric varieties},
    SERIES = {Graduate Studies in Mathematics},
    VOLUME = {124},
 PUBLISHER = {American Mathematical Society, Providence, RI},
      YEAR = {2011},
     PAGES = {xxiv+841},
      ISBN = {978-0-8218-4819-7},
   MRCLASS = {14M25 (05A15 05E45 52B12)},
  MRNUMBER = {2810322},
MRREVIEWER = {Ivan\ Arzhantsev},
       DOI = {10.1090/gsm/124},
       URL = {https://doi.org/10.1090/gsm/124},
}

\end{document}